\newcommand{\oo}{\mbox{$\mathbb O$}}
\newcommand{\x}{\mbox{$\mathbf x$}}
\newcommand{\y}{\mbox{$\mathbf y$}}
\newcommand{\vv}{\mbox{$\mathbf v$}}
\newcommand{\w}{\mbox{$\mathbf w$}}
\newcommand{\zz}{\mbox{$\mathbf z$}}
\newcommand{\ttt}{\mbox{$\cal T$}}
\newcommand{\f}{\mbox{$\cal F$}}
\newcommand{\F}{\mbox{$\cal F$}}
\newcommand{\sss}{\mbox{$\cal S$}}
\newcommand{\qa}{\mbox{\quad\mbox{and}\quad}}
\newcommand{\rt}{\mbox{$\mathbb R$}}
 \newcommand{\rank}{\mathrm{rank\;}}
\newcommand{\balpha}{\mbox{\boldmath $\alpha$}}
\newcommand{\bgamma}{\mbox{\boldmath $\gamma$}}
 \def\diag{\mathop{{\rm diag}}\nolimits}
\begin{document}

\title{Best  approximations of non-linear mappings:  Method of optimal injections}

%\author{ Anatoli~Torokhti and Pablo~Soto-Quiros %
% \thanks{Anatoli Torokhti is with the CIAM, University of South Australia, SA 5095, Australia (e-mail:  anatoli.torokhti@unisa.edu.au).}
%\thanks{Pablo Soto-Quiros is with  Instituto Tecnologico de Costa Rica, Apdo. 159-7050, Cartago, Costa Rica (e-mail: jusoto@tec.ac.cr).}}

\author{ Anatoli~Torokhti  \and Pablo~Soto-Quiros     %etc.
}

%\authorrunning{Short form of author list} % if too long for running head

\institute{ Anatoli~Torokhti \at
              CIAM, University of South Australia, SA 5095, Australia \\
              Tel.: +61-8-83023812\\
             % Fax: +61-45-678910\\
              \email{anatoli.torokhti@unisa.edu.au}           %  \\
%             \emph{Present address:} of F. Author  %  if needed
           \and
          Pablo~Soto-Quiros    \at
             Instituto Tecnologico de Costa Rica, Apdo. 159-7050, Cartago, Costa Rica\\
              Tel.: +506-25502025\\
             % Fax: +61-45-678910\\
              \email{jusoto@tec.ac.cr}           %  \\
}

\date{Received: date / Accepted: date}

\maketitle

\begin{abstract}
 While the theory of operator approximation with any given accuracy is  well elaborated, the theory of {best constrained} constructive operator
approximation  is still not so well developed.  Despite increasing demands from applications this subject is hardly tractable
because of intrinsic difficulties in associated  approximation techniques. This paper concerns the best constrained approximation of a non-linear operator  in probability spaces. We propose and justify a new approach and technique based on the following observation. Methods for best approximation are aimed at obtaining the best solution within a certain class; the accuracy of the solution is limited by the extent to which the class is suitable.
 By contrast, iterative methods are normally convergent but the convergence can be  quite slow.
Moreover, in practice only a finite number of iteration loops can  be carried out and therefore, the final approximate solution is often unsatisfactorily inaccurate.
A natural idea  is to combine the methods for  best approximation and iterative techniques to exploit their advantageous
features.  Here, we present an approach which realizes this.

The proposed approximating operator  has several degrees of freedom to minimize the associated error.
In particular, one of the specific features of the  approximating technique we develop is special random vectors  called injections. They are determined in the way that allows us to further minimize the associated error.

\keywords{Approximation of non-linear mappings \and Error minimization \and Optimization }
\end{abstract}
% \begin{IEEEkeywords}
%  Brillinger transform, data compression,  filtering.
% \end{IEEEkeywords}

\vspace*{3mm}
\noindent
{\bf Mathematics Subject Classification}  Primary 41A29; Secondary 15A99

\section{Introduction}\label{intro}

Let $\Omega$ be a set of outcomes in probability space $(\Omega, \Sigma, \mu)$ for which $\Sigma$ is a $\sigma$--field of measurable subsets of $\Omega$ and $\mu:\Sigma \rightarrow [0,1]$ is an associated probability measure.
 Let $\x\in  L^2(\Omega,\mathbb{R}^m)$ and $\y\in  L^2(\Omega,\mathbb{R}^n)$  be random vectors, $\f:L^2(\Omega,\mathbb{R}^n)\rightarrow L^2(\Omega,\mathbb{R}^m)$ be a   non-linear map  and $\x = \f(\y)$. It is assumed that $\f$ is unknown, and $\x$ and $\y$ are available.

We propose and justify a new method for the constructive $\f$ approximation such that first, an associated error is minimized and second, a structure of the approximating operator  satisfies the special constraint related to the dimensionality reduction of vector $\y$.

%Below, such a constructive approximation is studied.

\subsection{Motivation}\label{mot}

For the last  decades, the problem of  constructive approximation of non-linear operators has been a topic of profound research. A number of fundamental papers have appeared which established  significant advances in this research area.  Some relevant references can be
found, in particular, in \cite{Amat405}, \cite{Bruno13}, \cite{Chen911}, \cite{dingle17},   \cite{Gallman619},  \cite{Howlett353},  \cite{Istr118}, \cite{Prenter341}, \cite{Prolla36},  \cite{Rao726}, \cite{Sandberg863},
\cite{Sandberg40,Sandberg1586},  \cite{Timofte2005}.

The known related  results  mainly concern  proving the existence and uniqueness of operators approximating a given map, and for justifying the bounds of errors arising from the approximation methods. The assumptions are that preimages and images are deterministic and can be represented in an
analytical form, that is, by equations. At the same time, in many applications,  the sets of preimages and images   are stochastic and cannot be described by equations. Nevertheless, it is possible to represent these  sets  in terms of their numerical characteristics, such as the  expectation and  covariance matrices. Typical examples are  stochastic signal processing \cite{Tomasz1293,905856,Zhu745,Wang2015,7482957}, statistics \cite{Brillinger2001,Jolliffe2002,buhlmann2011,yiy763,tor5277}, engineering \cite{Saghri2010,Kountchev91,moura993,azimi169} and  image processing \cite{burge2006,Phophalia201424}; in the latter case, a digitized image, presented by a matrix, is often interpreted as the sample of a stochastic signal.

While the theory of operator approximation with any given accuracy
is  well elaborated   (see, e.g.,  \cite{Amat405}, \cite{Bruno13},\cite{Chen911},  \cite{dingle17}),  \cite{Gallman619},  \cite{Howlett353},  \cite{Istr118}, \cite{Prenter341}, \cite{Prolla36},  \cite{Rao726},
\cite{Sandberg40}, \cite{Sandberg863}, \cite{Sandberg1586}, \cite{Timofte2005}), the theory of {\em best constrained} constructive operator approximation  is still not so well developed, although this  is an area of intensive recent research (see, e.g.,
\cite{Chen1191,Chen319,Fomin1334,Temlyakov33,torok111,Wei2016}). Despite increasing demands from applications \cite{Aminzare91,Zhu745,Wang2015,7482957,Jolliffe2002,buhlmann2011,yiy763,Saghri2010,
Kountchev91,moura993,azimi169,Phophalia201424, Chen1191,Chen319,Fomin1334,Temlyakov33,Wei2016,Schneider295, Billings1013,Piroddi1767, Alter11828,Gianfelici2009422,Formaggia2009,Ambrosi2002,Tomasz3339,Poor2001} this subject is hardly tractable
because of intrinsic difficulties in best approximation techniques, especially when the approximating
operator should have a specific structure implied by
the underlying problem.

We wish to extend the known results in this area to the case when the sets of preimages and images of  map $\F$ are stochastic, and the approximating operator we search is constructive in the sense it can numerically be realized and, therefore, is applicable to  problems in applications.

\subsection{Short description of the method}\label{}

More specifically, we develop   a new approach to the best constructive approximation of a non-linear
operator $\f$ in probability spaces subject to a specialized  criterion associated  with the dimensionality reduction of random preimages. The latter constraint follows from the requirements  in applications such as those considered in  \cite{Brillinger2001,Jolliffe2002,buhlmann2011,yiy763,tor5277,Kountchev91,Alter11828,Gianfelici2009422}. In particular, in signal processing and
system theory, a dimensionality reduction of random signals is used to optimize the cost of signal transmission.
It is assumed that the only available  information on  $\F$ is  given by certain covariance matrices formed from the preimages and images. This is a typical assumption used in the applications such as those considered, e.g., in \cite{Tomasz1293,905856,Zhu745,Wang2015,7482957,Brillinger2001,Jolliffe2002,buhlmann2011,yiy763,Schneider295,Aminzare91,Ambrosi2002,Tomasz3339,Poor2001}. Here, we adopt that assumption. As mentioned, in particular, in %\cite{Ledoit2004365,ledoit2012,Adamczak2009,Vershynin2012,won2013,yang1994},
\cite{Ledoit2004365,ledoit2012,Adamczak2009,Vershynin2012,won2013,yang1994},
{\em a priori} knowledge of the covariances can come either from specific data models, or, after sample estimation during a training phase.

%An assumption used in the  methods of transformations of random vectors is that the covariance matrices $E_{xy}$ and $E_{yy}$  are known  (see, for example, \cite{905856,Brillinger2001,kailath2000,Poor2001,tor199}). Here, we adopt this assumption.

 The problem we consider (see (\ref{gxd11}) below) concerns finding the best approximating operator that depends on $2p+2$ unknown matrices $G_j$ and $H_j$, for $j=0,\ldots,p$ and $p$ more unknown random vectors $\vv_1,\ldots, \vv_p$. We call $\vv_1,\ldots, \vv_p$ the {\em injections}. Here, $p$ is a non-negative integer $p$. The injections $\vv_1,\ldots, \vv_p$ are aimed to further diminish the associated error. The difficulty is that $3p+2$ unknowns should be determined from a minimization of the single cost function given in (\ref{gxd11}).

The solution is presented in Section \ref{n5o8}  and is based on the following observation. Methods for best approximation are aimed at obtaining the best solution within a certain class; the accuracy of the solution is limited by the extent to which the class is satisfactory.
 By contrast, iterative methods are normally convergent but the convergence can be  quite slow.
Moreover, in practice only a finite number of iteration loops can  be carried out and therefore, the final approximate solution is often unsatisfactorily inaccurate.
A natural idea  is to combine the methods for  best approximation and iterative techniques to exploit their advantageous
features.

In  Section \ref{n5o8}, we present an approach which realizes this. First,  a special iterative
procedure is proposed which aims to improve the accuracy of approximation with
each consequent iteration loop. Secondly, the best approximation problem is
solved providing the smallest associated error within the chosen class of
approximants for  each iteration loop.
In  Section \ref{hh58}, we show that the combination of these  techniques allows us to build a
computationally efficient and flexible method. In particular, we prove that  the
error in approximating $\F$ by the proposed method  decreases with an increase of
the number of iterations. An application is made to the optimal filtering of
stochastic signals.

\section{The proposed approach }\label{new}

 \subsection{Some special notation}\label{some}

Let us write  $\x=[\x_{(1)},\ldots, \x_{(m)}]^T$ and $\y=[\y_{(1)},\ldots, \y_{(n)}]^T$  where $\x_{(i)}, \y_{(j)}\in  L^2(\Omega,\mathbb{R})$, for $i=1,\ldots,m$ and $j=1,\ldots,n$,
and $\x(\omega)\in\rt^m$ and $\y(\omega)\in\rt^n$ for all $\omega\in \Omega$.

 Each matrix $A\in\rt^{m\times n}$ defines a bounded linear transformation ${\mathcal A}: L^2(\Omega,\mathbb{R}^n) \rightarrow L^2(\Omega,\mathbb{R}^m)$. It is customary to write $A$ rather then ${\mathcal A}$ since $[{\mathcal A}(\x)](\omega) = A[\x(\omega)]$, for each $\omega\in \Omega$.

Let us also denote
\begin{eqnarray}\label{nm37}
\|{\bf x}\|^2_\Omega  =\int_\Omega \sum_{j=1}^m [\x_j(\omega)]^2 d\mu(\omega) < \infty.
\end{eqnarray}
The covariance matrix formed from $\x$ and $\y$ is denoted by
\begin{eqnarray}\label{cb903}
E_{xy}=\left\{\int_\Omega {\bf x}_{(i)}(\omega){\bf y}_{(j)}(\omega)d\mu(\omega)\right\}_{i,j=1}^{m,n}.
\end{eqnarray}

The Moore-Penrose pseudo-inverse \cite{ben1974} of matrix $M$ is denoted by $M^{\dag}$.

 \subsection{Generic structure of approximating operator}\label{nmsmi}

Let $\vv_1,\ldots, \vv_p$ be random vectors such that $\vv_j\in L^2(\Omega,\mathbb{R}^{q_j})$, for $j=1,\ldots, p$. We write $\y=\vv_0$ and $q_0=n$. As  mentioned before, we call $\vv_1,\ldots, \vv_p$ the injections. This is because $\vv_1,\ldots, \vv_p$ contribute to the decrease of the associated error as shown in Section \ref{fk92} below.
The choice of  $\vv_1,\ldots, \vv_p$ is considered in Section \ref{jkw8} where each  $\vv_j$, for $j=1,\ldots,p$,  is defined by a non-linear transformation $\varphi_j$ of $\y$, i.e., $\vv_j= \varphi_j(\y)$. To facilitate the numerical implementation of the approximating technique  introduced below, each vector $\vv_j$, for $j=1,\ldots, p$, is transformed to vector $\zz_j\in  L^2(\Omega,\mathbb{R}^{q_j})$ by transformation $Q_j$ so that
\begin{eqnarray}\label{772x2}
\zz_j = Q_j(\vv_j, Z_{j-1}),
\end{eqnarray}
%where $V_j=\{\vv_0,\ldots, \vv_j\}$ and $Z_{j-1}=\{\zz_0,\ldots, \zz_{j-1}\}$.
where $Z_{j-1}=\{\zz_0,\ldots, \zz_{j-1}\}$.
The choice of $Q_j$ is considered in Section \ref{kla7}.

Further, for $i=0,1,\ldots,p$, let  $ G_i\in\rt^{m\times r_i},$ $H_i\in\rt^{r_i\times q_i}$ where $r_i$  is given, $0<r_i <r$ and
\begin{eqnarray}\label{hhx2}
r=r_0 +\ldots +r_p.
\end{eqnarray}
 Here, $r$ is a positive integer such that $r\leq \min \{m, n\}$.     % $r=1,2,\ldots, \min \{m, n\}$.
% {\bf (Where $r=r_0 +\ldots +r_p$ is used?..... In Theorem \ref{nm779})}

It is convenient to set $Q_0=I$ and $\zz_0=\vv_0=\y$.  To approximate $\f$, for a  given  reduction ratio
 $$
 \displaystyle c=r/{\min \{m, n\}},
  $$
  we consider  operator $ \ttt_p: L^2(\Omega,\mathbb{R}^{q_0})\times \ldots \times L^2(\Omega,\mathbb{R}^{q_p})\rightarrow L^2(\Omega,\mathbb{R}^{m})$ represented by
%{\bf (here, $\f$ is not defined by a matrix!)}
\begin{eqnarray}\label{hq1q2x2}
 \ttt_p(\vv_0,\ldots\vv_p) = G_0 H_0\zz_0  +\ldots  + G_p H_p \zz_p,
\end{eqnarray}
where $G_j: L^2(\Omega,\mathbb{R}^{r_j})\rightarrow L^2(\Omega,\mathbb{R}^{m})$ and $H_j: L^2(\Omega,\mathbb{R}^{q_j})\rightarrow L^2(\Omega,\mathbb{R}^{r_j})$, for $j=0,1,\ldots,p$, are linear operators (i.e. $G_j$ and $H_j$  are represented by $m\times r_j$ and $r_j\times q_j$ matrices, respectively. Recall, we use the same symbol to define a matrix and the associated liner operator).

Importantly, operators $H_0,\ldots, H_p$ imply the dimensionality reduction of vectors $\vv_0,$ $\ldots, \vv_p$. This is because $H_i\zz_i\in  L^2(\Omega,\mathbb{R}^{r_i})$ where $0< r_i < r\leq \min \{m, n\}$, for $i=0,\ldots, p$.

We call $p$ the degree of $ \ttt_p$.
It is shown below that $\ttt_p$ approximates an operator of interest $\F:  L^2(\Omega,\mathbb{R}^{n})\rightarrow L^2(\Omega,\mathbb{R}^{m})$ with the accuracy represented by theorems in Sections \ref{err1},  \ref{yy41} and \ref{fk92}.
% \ref{nm209}, \ref{nm339},  \ref{nm779}, \ref{n7712}.

 \subsection{Statement of the problem}\label{nm109}

Let $\F:  L^2(\Omega,\mathbb{R}^{n})\rightarrow L^2(\Omega,\mathbb{R}^{m})$ be a continuous  operator.
We consider the problem as follows: Given $\x, \y$ and $r_0,\ldots,r_p$, find  matrices $G_0, H_0, \ldots, G_p, H_p$ and vectors $\vv_1,$ $\ldots, \vv_{p}$  that solve
\begin{eqnarray}\label{gxd11}
\min_{\mathbf{z}_1,\ldots, \mathbf{z}_p}\hspace*{2mm} \min_{\substack{G_0, H_0,\ldots, G_p, H_p\\}}  \left\|\f(\y) -  \sum_{j=0}^{p}G_j H_j \zz_j\right\|^2_\Omega
\end{eqnarray}
subject to
\begin{eqnarray}\label{xn39}
G_j\in\rt^{m\times r_j} \qa H_j\in\rt^{r_j\times q_j},   %\quad \text{for $j=0,\ldots, p$},
\end{eqnarray}
and
\begin{eqnarray}\label{gg09}
\hspace*{10mm}E_{z_i z_j} =\oo,\quad \text{for $i\neq j$},
\end{eqnarray}
where  $i,j=0,\ldots,p$ and  $\mathbb{O}$ denotes the zero matrix (and the zero vector).
% We note that, for $F_j=G_j H_j$, the constraint in (\ref{xn39}) can also be represented as
%$$
%\rank F_j = r_j.
%$$

 It will be shown in Section \ref{n5o8} below that the solution of problem (\ref{gxd11}) - (\ref{gg09}) is determined under a special condition imposed on vectors  $\vv_1,\ldots,\vv_p$.

%Examples \ref{hu2n1}, \ref{nmx9}, \ref{vgg82}, \ref{jks8} in Section \ref{782m}, \ref{err1} and \ref{fk92} below illustrate this observation.

\subsection{Specific features of problem in (\ref{gxd11})-(\ref{gg09})}\label{nq8q09}

%Specific features of the problems in (\ref{44k1}), (\ref{gg09}) and (\ref{gxd11}) are that

% Obviously,  solutions of problems (\ref{44k1}) and (\ref{gxd11}) are not unique. Even for the case $p=0$, matrices $G_0$, $H_0$ that solve (\ref{44k1}) and (\ref{gxd11}) are not unique. We omit the question of uniqueness.  Here, we concentrate our attention on the constructive determination of

The proposed approximating operator $\ttt_p$ has several degrees of freedom to minimize the associated error. They are:

- `degree' $p$ of $\ttt_p$,

- matrices $G_0, H_0, \ldots,$ $G_p, H_p$,

-  injections $\vv_1,\ldots,\vv_p$,

 - values of $r_0,\ldots, r_p$ in (\ref{hhx2}), and

 -  dimensions $q_1,\ldots,q_p$ of  injections $\vv_1,\ldots, \vv_p$.

  It is shown in Sections \ref{yy41}, \ref{err1} and \ref{fk92} below that both the optimal choice of  $G_0, H_0, \ldots,$ $G_p, H_p$ and injections $\vv_1,\ldots, \vv_p$, and the increase in  $r_0,\ldots, r_p$ and  $q_1,\ldots,q_p$ leads to the decrease in the error associated with approximating operator $\ttt_p$.

Injections $\vv_1,\ldots,\vv_p$ represent a new special feature of the proposed technique.

Further, we would like to mention that the utility of transformations $Q_0,$ $\ldots, Q_{p}$ is twofold. First, they lead to a faster  numerical realization of the proposed method.  This is because the transformations imply the condition (\ref{gg09}) which allows to represent  problem in (\ref{gxd11}) and (\ref{xn39}) as a set of simpler problems each of which depends on the single pair $G_j, H_j$, for $j=0,\ldots,p$ (see (\ref{vbn34}) and (\ref{sad334}) below). It allows us to
avoid numerical difficulties associated with computation of  large  matrices.
%It also allows us to and determine $G_j, H_j$, for $j=0,\ldots,p,$ from the problem in (\ref{sad334}) which is simpler than that in (\ref{gxd11}).
Details are given in Section \ref{svd} that follows. Second,  transformations $Q_0,$ $\ldots, Q_{p}$ allow us to  determine injections $\vv_1,\ldots, \vv_p$  in the optimal way developed in Section \ref{jkw8} below.
%Optimal injections $\vv_1,\ldots, \vv_p$ further minimize the  error associated with  approximating operator $\ttt_p$.

%\section{Solution  of problem  (\ref{gxd11})-(\ref{gg09})}\label{n5o8}
\section{Preliminary results}\label{n5o8}

Here, we consider the determination of  pairwise uncorrelated vectors $\zz_1,\ldots,$ $\zz_p$ and the solution of a particular case of the problem in (\ref{gxd11}),  (\ref{xn39}), (\ref{gg09}) where a minimization with respect to $\zz_1,\ldots,$ $\zz_p$ is not included.  These preliminary results will be used in in Section \ref{hh58} where the  solution of the original problem represented by (\ref{gxd11}),  (\ref{xn39}), (\ref{gg09}) is provided.

\begin{definition}\label{93mm}
Random vectors $\zz_0,\ldots,\zz_p$ are called {\em pairwise uncorrelated} if  the condition in (\ref{gg09})  holds for any pair of vectors $\zz_i$ and $\zz_j$, for $i\neq j$, where $i,j=0,\ldots,p$. Two vectors $\zz_i$ and $\zz_j$ belonging to the set of the pairwise uncorrelated vectors are called {\em uncorrelated.}
\end{definition}

For $j=0,\ldots,p$, let ${\mathcal N} ({ M}^{(j)})$ be a null space of matrix ${ M}^{(j)}\in \rt^{q_j\times q_j}$.

\begin{definition}\label{90cm}
 Random vectors $\vv_0,\ldots, \vv_p$ are called {\em jointly independent} if
$$
{M}^{(0)}\vv_0(\omega) + \ldots + {M}^{(p)}\vv_p(\omega) =\oo,
$$
almost everywhere in $\Omega$, only if $\vv_j(\omega)\in {\mathcal N} ({ M}^{(j)})$, for $j=0,\ldots,p$.
\end{definition}

\begin{definition}\label{io29}
Random vector $\vv_j$, for $j=0,\ldots,p,$ is called the {\em well-defined injection} if
\begin{eqnarray}\label{mku}
\it\Gamma_{z_j} =  E_{xz_j}E_{z_jz_j}^\dagger E_{z_jx} \neq \oo,
\end{eqnarray}
where ${\bf z_j}$ is defined by (\ref{772x2}). Otherwise, injection $\vv_j$ is called ill-defined.
\end{definition}

 An explanation for introducing  Definition \ref{io29} is provided by Remark \ref{opw8} below.

%%%%%%%%%%%%%%%%%%%%%%%%%%%%%%%%%%%%%%%%%%%%%%%%%%%%%%%%%%%%%%%%%%%%%%%%%%%%%%%%%%%%%
%%%%%%%%%%%%%%%%%%%%%%%%%%%%%%%%%%%%%%%%%%%%%%%%%%%%%%%%%%%%%%%%%%%%%%%%%%%%%%%%%%%%%

\subsection{Determination of pairwise uncorrelated vectors }\label{kla7}

\begin{theorem}\label{902n}
Let random vectors $\vv_0,\ldots, \vv_p$ be jointly independent. Then they are transformed to the pairwise uncorrelated vectors $\zz_0,\ldots,\zz_p$ by transformations $Q_0,\ldots,Q_p$ as follows:
\begin{eqnarray}\label{nma0}
&&\zz_0= Q_0(\vv_0) =\vv_0 \quad \text{and, for $j=1,\ldots,p$,}\\
&&\zz_j = Q_j(\vv_j, Z_{j-1}) = \vv_j -\sum_{k=0}^{j-1} E_{v_j z_k}E_{z_k z_k}^\dag \zz_k.\label{nm92}
\end{eqnarray}
\end{theorem}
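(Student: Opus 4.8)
The plan is to verify directly that the vectors $\zz_0,\ldots,\zz_p$ produced by the Gram--Schmidt--type recursion in (\ref{nma0})--(\ref{nm92}) satisfy $E_{z_i z_j}=\oo$ for $i\neq j$, and that the recursion is well posed under the joint-independence hypothesis. First I would establish well-posedness: at step $j$, the term $E_{v_j z_k}E_{z_k z_k}^\dag \zz_k$ requires the pseudoinverse $E_{z_k z_k}^\dag$, which exists for any matrix, so the only real content is that the recursion is internally consistent and that each $\zz_j\in L^2(\Omega,\mathbb{R}^{q_j})$; this is immediate since $\zz_j$ is a finite linear combination of the $L^2$ vectors $\vv_0,\ldots,\vv_j$. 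Writing $\zz_j = \vv_j - \sum_{k=0}^{j-1} M^{(k)}_j \zz_k$ with $M^{(k)}_j := E_{v_j z_k}E_{z_k z_k}^\dag$, one sees inductively that each $\zz_j$ lies in the span (with matrix coefficients) of $\vv_0,\ldots,\vv_j$, and conversely each $\vv_j$ is recovered from $\zz_0,\ldots,\zz_j$; joint independence of $\vv_0,\ldots,\vv_p$ will be used to guarantee that the representation does not collapse in a degenerate way, i.e.\ that the construction genuinely yields $p+1$ meaningful vectors rather than forcing some $\zz_j$ to vanish identically on a set of positive measure in an uncontrolled manner.

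The core computation is an induction on $j$ showing: for all $i<j$, $E_{z_j z_i}=\oo$. Fix $j$ and assume $E_{z_k z_i}=\oo$ whenever $i\neq k$ and $i,k\le j-1$ (the inductive hypothesis among the already-constructed vectors). Then for $i\le j-1$,
\begin{eqnarray}\label{plancheck}
E_{z_j z_i} = E_{v_j z_i} - \sum_{k=0}^{j-1} E_{v_j z_k}E_{z_k z_k}^\dag E_{z_k z_i}.
\end{eqnarray}
By the inductive hypothesis $E_{z_k z_i}=\oo$ for $k\neq i$, so the sum collapses to the single term $k=i$, giving $E_{z_j z_i} = E_{v_j z_i} - E_{v_j z_i}E_{z_i z_i}^\dag E_{z_i z_i}$. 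The key identity to invoke here is $E_{v_j z_i}E_{z_i z_i}^\dag E_{z_i z_i} = E_{v_j z_i}$: this follows because the cross-covariance $E_{v_j z_i}$ has row space contained in the column space of the (symmetric, positive semidefinite) matrix $E_{z_i z_i}$ — a standard fact about covariance matrices, since $E_{v_j z_i} = \mathbb{E}[\vv_j \zz_i^T]$ and any vector orthogonal to the range of $E_{z_i z_i}$ annihilates $\zz_i$ almost everywhere — combined with the pseudoinverse property $A^\dag A$ acting as the orthogonal projector onto the row space of $A$. Hence $E_{z_j z_i}=\oo$, completing the induction.

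I would then note that the statement as phrased only asserts pairwise uncorrelatedness of $\zz_0,\ldots,\zz_p$ (Definition~\ref{93mm}), which is exactly what the induction delivers; there is no orthonormalization or rank claim to prove. The main obstacle, and the point needing the most care, is the justification of the range-inclusion identity $E_{v_j z_i}E_{z_i z_i}^\dag E_{z_i z_i}=E_{v_j z_i}$ in the $L^2(\Omega)$ setting: one must argue that if a deterministic vector $\mathbf{a}$ satisfies $E_{z_i z_i}\mathbf{a}=\bfzero$ then $\mathbf{a}^T\zz_i=0$ in $L^2(\Omega,\mathbb{R})$ (equivalently $\mathbf{a}^T\zz_i(\omega)=0$ for $\mu$-almost every $\omega$), since $\mathbf{a}^T E_{z_i z_i}\mathbf{a} = \|\mathbf{a}^T\zz_i\|^2_\Omega$; this then forces $E_{v_j z_i}\mathbf{a}=\mathbb{E}[\vv_j(\mathbf{a}^T\zz_i)]=\bfzero$, i.e.\ $\operatorname{row\,space}(E_{v_j z_i})\subseteq\operatorname{range}(E_{z_i z_i})$, after which the pseudoinverse identity is purely linear-algebraic. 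Everything else is bookkeeping of the finite sums, so I would present the induction cleanly and relegate the covariance-range lemma to a one-line remark or cite it as standard.
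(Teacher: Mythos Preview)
Your proposal is correct and follows essentially the same inductive Gram--Schmidt argument as the paper: assume $E_{z_k z_i}=\oo$ for $k\neq i$ among the first $j$ vectors, expand $E_{z_j z_i}$, collapse the sum to the $k=i$ term, and invoke the identity $E_{v_j z_i}E_{z_i z_i}^\dag E_{z_i z_i}=E_{v_j z_i}$. The paper simply cites this last identity from an external lemma, whereas you supply a self-contained range-inclusion justification; note also that, like the paper's proof, your argument never actually uses the joint-independence hypothesis for the pairwise-uncorrelatedness conclusion, so your remarks about it preventing ``collapse'' are extraneous to what is being proved.
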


 \begin{proof}
%We wish that $\e_{z_i v_j} = {\mathbb O}$ for $i\neq j$.
Suppose  that the condition in (\ref{gg09}) holds for $\zz_0,$ $\ldots,\zz_{i-1}$. Then, for $\ell = 0,\ldots, i-1$,
\begin{multline}\label{eo2n}
E_{z_i z_\ell}=E[(\vv_i - \sum_{l=0}^{i-1} E_{v_i z_l}E_{z_l z_l}^\dag \zz_l)\zz_\ell^T]\\
=E_{v_i z_\ell} - \sum_{l=0}^{i-1} E_{v_i z_l}E_{z_l z_l}^\dag  E_{z_l z_\ell}\\
=E_{v_i z_\ell} -  E_{v_i z_\ell}E_{z_\ell z_\ell}^\dag  E_{z_\ell z_\ell}=\oo.
\end{multline}
The latter is true because  by Lemma 1 in \cite{2001390},
$$
E_{v_i z_\ell}E_{z_\ell z_\ell}^\dag  E_{z_\ell z_\ell} = E_{v_i z_\ell}.
$$
 Thus, by  induction, (\ref{gg09})  holds for any $i=0,\ldots,p$.  $\hfill\blacksquare$
 \end{proof}

The solution device of the problem in (\ref{gxd11})-(\ref{gg09}) is based, in particular, on the solution of the problem
\begin{eqnarray}\label{44k1}
\min_{\substack{G_0, H_0,\ldots, G_p, H_p\\}}  \left\|\f(\y) -  \sum_{j=0}^{p}G_j H_j \zz_j\right\|^2_\Omega
\end{eqnarray}
subject to (\ref{xn39}) and (\ref{gg09}). In the following Section \ref{svd}, matrices  $G_0, H_0, \ldots,$ $G_p, H_p$ that solve this problem  are given.

%%%%%%%%%%%%%%%%%%%%%%%%%%%%%%%%%%%%%%%%%%%%%%%%%%%%%%%%%%%%%%%%%%%%%%%%%%%%%%%%%%%%%
%%%%%%%%%%%%%%%%%%%%%%%%%%%%%%%%%%%%%%%%%%%%%%%%%%%%%%%%%%%%%%%%%%%%%%%%%%%%%%%%%%%%%

\subsection{Determination of matrices  $G_0, H_0, \ldots,$ $G_p, H_p$ that solve the problem in (\ref{44k1}), (\ref{xn39}) and (\ref{gg09})}\label{svd}

First, recall the definition of a truncated SVD.
Let the SVD of matrix  $A\in \rt^{m\times n}$ be given by
$
A=U_A\Sigma_A V_A^T,
 $
 where  $U_A=[u_1 \;u_2\;\ldots u_m]\in \rt^{m\times m}, V_A=[v_1 \;v_2\;\ldots v_n]\in \rt^{n\times n}$  are unitary matrices, and  $\Sigma_A=\diag(\sigma_1(A),$ $\ldots,$ $\sigma_{\min(m,n)}(A))\in\rt^{m\times n}$
 is a generalized diagonal matrix,  with the singular values $\sigma_1(A)\ge \sigma_2(A)\ge\ldots\ge 0$ on the main diagonal.
 For $k<m$, $j<n$ and $\ell<\min(m,n)$, we denote
 $$
 U_{A,k}=[u_1 \;u_2\;\ldots u_k],\quad V_{A,j}=[v_1 \;v_2\;\ldots v_j], \quad \Sigma_{A,\ell}=\diag(\sigma_1(A), \ldots,\sigma_\ell(A)),
 $$
  and write
  $$
  \displaystyle {\it\Pi}_{A,L}=\hspace*{-4mm}\sum_{k=1}^{\rank (A)}u_k u_k^T \qa \displaystyle  {\it\Pi}_{A,R}=\hspace*{-4mm} \sum_{j=1}^{\rank (A)}v_j v_j^T.
  $$
For $r=1,\ldots,\rank (A)$,
 \begin{eqnarray}\label{cdd03}
 \displaystyle [A]_r= \sum_{i=1}^{r}\sigma_i(A)u_i v_i^T\in \rt^{m\times n},
 \end{eqnarray}
 is the truncated SVD of $A$. For $r\geq\rank (A)$ we write $[A]_r=A\;(=A_{\rank (A)})$.
 % For $1\le k<\rank (A)$, the matrix $A_k$ is uniquely defined
% if and only if $\sigma_k(A)>\sigma_{k+1}(A)$.

\begin{theorem}\label{9772n}
 Let  $\vv_0,\ldots,\vv_p$ be  well-defined injections and vectors ${\zz}_0,\ldots,{\zz}_p$ be pairwise uncorrelated.
Then the minimal Frobenius norm solution to the problem in (\ref{44k1})  is given, for $j=0,\ldots,p$, by
\begin{eqnarray}\label{73n32}
G_j = U_{{\it\Gamma}_{z_j},r_j} \qa H_j=U_{{\it\Gamma}_{z_j},r_j}^T  E_{{x}{z_j}}E_{{z_j}{z_j}}^\dag,
\end{eqnarray}

\end{theorem}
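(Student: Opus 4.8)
The plan is to decouple the minimization in (\ref{44k1}) into $p+1$ independent rank-constrained least squares problems, one for each pair $(G_j,H_j)$, and then solve each of them via a generalized Eckart--Young argument. First I would expand the squared $\Omega$-norm using the pairwise uncorrelatedness (\ref{gg09}) of $\zz_0,\ldots,\zz_p$. Writing $K_j=G_jH_j\in\rt^{m\times q_j}$ and using $\|\cdot\|_\Omega^2=\e[\|\cdot\|^2]$ together with $\e[\zz_i\zz_j^T]=E_{z_iz_j}=\oo$ for $i\neq j$, the cross terms $\e[\zz_i^T K_i^T K_j\zz_j]$ all vanish, so
\begin{eqnarray}\label{decomp-plan}
\left\|\f(\y)-\sum_{j=0}^p K_j\zz_j\right\|_\Omega^2
= \|\f(\y)\|_\Omega^2 - 2\sum_{j=0}^p \e[\x^T K_j\zz_j] + \sum_{j=0}^p \e[\zz_j^T K_j^T K_j\zz_j],
\end{eqnarray}
which is a sum of terms each involving only one $K_j$. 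Hence it suffices to minimize, for each $j$, the functional $J_j(K_j)= \tr(K_j E_{z_jz_j}K_j^T) - 2\tr(K_j E_{z_jx})$, now subject to the rank constraint $\rank K_j\le r_j$ inherited from the factorization $K_j=G_jH_j$ with $G_j\in\rt^{m\times r_j}$, $H_j\in\rt^{r_j\times q_j}$.

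Next I would reduce each subproblem to a matrix-approximation problem in the standard (unweighted) Frobenius norm. Completing the square, $J_j(K_j) = \|K_j E_{z_jz_j}^{1/2} - E_{xz_j}(E_{z_jz_j}^{1/2})^\dagger\|_F^2 + \text{const}$, modulo the usual care about the range of $E_{z_jz_j}$ — here is where the \emph{well-defined injection} hypothesis (Definition \ref{io29}) and Lemma~1 of \cite{2001390} enter: they guarantee $E_{xz_j}E_{z_jz_j}^\dagger E_{z_jz_j}=E_{xz_j}$, so that the constant term and the reduction to a Frobenius problem are legitimate, and that $\it\Gamma_{z_j}=E_{xz_j}E_{z_jz_j}^\dagger E_{z_jx}\neq\oo$ so the truncation is non-trivial. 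Since $B\mapsto B E_{z_jz_j}^{1/2}$ maps rank-$\le r_j$ matrices to rank-$\le r_j$ matrices (and back via the pseudo-inverse on the relevant range), the constrained minimizer of $J_j$ is obtained by taking the best rank-$r_j$ approximation (truncated SVD) of $E_{xz_j}(E_{z_jz_j}^{1/2})^\dagger$, i.e. $[\,E_{xz_j}(E_{z_jz_j}^{1/2})^\dagger\,]_{r_j}$, by the Eckart--Young--Mirsky theorem.

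Finally I would identify this truncated SVD with the claimed formula (\ref{73n32}). The key observation is that $E_{xz_j}(E_{z_jz_j}^{1/2})^\dagger$ and $\it\Gamma_{z_j}=E_{xz_j}E_{z_jz_j}^\dagger E_{z_jx}$ share the same left singular vectors: indeed $\it\Gamma_{z_j} = \big(E_{xz_j}(E_{z_jz_j}^{1/2})^\dagger\big)\big(E_{xz_j}(E_{z_jz_j}^{1/2})^\dagger\big)^T$, so $U_{\it\Gamma_{z_j},r_j}$ spans the top-$r_j$ left singular subspace of $E_{xz_j}(E_{z_jz_j}^{1/2})^\dagger$. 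Projecting onto that subspace gives $K_j = \it\Pi_{U_{\it\Gamma_{z_j},r_j}} E_{xz_j}E_{z_jz_j}^\dagger = U_{\it\Gamma_{z_j},r_j}U_{\it\Gamma_{z_j},r_j}^T E_{xz_j}E_{z_jz_j}^\dagger$, and reading off the factors $G_j=U_{\it\Gamma_{z_j},r_j}$, $H_j=U_{\it\Gamma_{z_j},r_j}^T E_{xz_j}E_{z_jz_j}^\dagger$ yields (\ref{73n32}); minimality of the Frobenius norm of the solution follows from the minimality built into the Eckart--Young construction together with the minimum-norm property of the pseudo-inverse. I expect the main obstacle to be the bookkeeping around the non-invertibility of $E_{z_jz_j}$: one must work consistently with $E_{z_jz_j}^{1/2}$ and its pseudo-inverse on the correct range, invoking the well-defined injection hypothesis and the cited lemma at exactly the right places so that the completion-of-squares step and the passage between the weighted and unweighted problems are both valid; the rest is a routine application of the decoupling in (\ref{decomp-plan}) and the Eckart--Young theorem.
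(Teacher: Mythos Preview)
Your proposal is correct and follows essentially the same route as the paper: decouple via pairwise uncorrelatedness into $p+1$ independent rank-constrained problems (the paper's identity (\ref{vbn34})), complete the square exactly as in (\ref{erc34}) using $E_{xz_j}E_{z_jz_j}^\dagger E_{z_jz_j}=E_{xz_j}$, and then invoke the rank-$r_j$ truncation result. The only cosmetic difference is that the paper delegates the final step to the references \cite{tor5277,tor843,Torokhti2007,Liu201777,140977898,Wang201598}, whereas you spell out the Eckart--Young argument and the identification $\it\Gamma_{z_j}=\big(E_{xz_j}(E_{z_jz_j}^{1/2})^\dagger\big)\big(E_{xz_j}(E_{z_jz_j}^{1/2})^\dagger\big)^T$ that links the left singular vectors; this is exactly what those references contain.
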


\begin{proof}
For $j=0,\ldots,p$, let $S_j = G_j H_j$, and let ${S = [S_0,\ldots, S_p]}$ and $\w = [\zz_0^T,\ldots,$ $\zz_p^T]^T$.
Then, for $\x=\f(\y)$,
\begin{eqnarray}\label{123sd}
%\varepsilon_{_{F_0,\ldots,F_h}} =
\|\f(\y) - \sum_{j=0}^p S_j\zz_j\|^2_\Omega = \mbox{\em tr}\left\{(E_{xx} - E_{xw}S^T - SE_{wx} + SE_{ww}S^T)\right\},
\end{eqnarray}
where $E_{xw}=[E_{xz_0},\ldots,  E_{xz_p}]$ and by Theorem \ref{902n}, matrix $E_{ww}$ is block-diagonal,
$E_{ww}=\text{\em diag} [E_{z_0 z_0}, \ldots, E_{z_p z_p}].$
Thus,
$$
SE_{ww}S^T=S_0E_{z_0z_0}S_0^T+\ldots + S_pE_{z_pz_p}S_p^T,$$
$$ SE_{wx}=S_0E_{z_0x}+\ldots +S_pE_{z_px}.$$
Therefore,  (\ref{123sd}) implies %{\bf (Correct! I checked it.)}
\begin{eqnarray}\label{vbn34}
\|\f(\y) - \sum_{j=0}^p S_j\zz_j\|^2_\Omega  = \sum_{j=0}^p \|{\f(\y)}-S_j{\bf z_j}\|^2_\Omega -  \mbox{\em tr}\left\{p \hspace*{0.3mm} E_{xx}\right\},
\end{eqnarray}
where
\begin{eqnarray}\label{erc34}
&&\hspace*{-10mm} \|{\F(\y)}-S_j{\bf z_j}\|^2_\Omega =  \|{\F(\y)}-S_j{\bf z_j}\|^2_\Omega \nonumber\\
 &&  \hspace*{0mm} = \mbox{\em tr} \{E_{xx} - E_{xz_j}S_j^T - S_jE_{z_jx} +  S_j E_{z_jz_j} S_j^T\}\nonumber \\
&& \hspace*{0mm} = \|E_{xx}^{1/2}\|^2 -\|E_{xz_j}(E_{z_jz_j}^{1/2})^\dag\|^2+ \|(S_j - E_{xz_j}E_{z_jz_j}^{\dag}) E_{z_jz_j}^{1/2}\|^2 \nonumber \\
&& \hspace*{0mm} = \|E_{xx}^{1/2}\|^2 - \|E_{xz_j}(E_{z_jz_j}^{1/2})^\dag\|^2 +\|S_jE_{z_jz_j}^{1/2} - E_{xz_j}(E_{z_jz_j}^{1/2})^{\dag}\|^2
\end{eqnarray}
because
$$
E_{z_jz_j}^{\dag} E_{z_jz_j}^{1/2} = (E_{z_jz_j}^{1/2})^{\dag}
$$
and
\begin{equation}\label{62-ee}
E_{xz_j}E_{z_jz_j}^\dag E_{z_jz_j} = E_{xz_j}
\end{equation}
(see \cite{torbook2007}).

Let us denote by $\mathbb{R}_{r_j}^{m\times n}$ the set of all $m\times n$ matrices of rank at most $r_j$. In the RHS of (\ref{erc34}), only the last term depends on $S_j$. Therefore,  on the basis of \cite{tor5277,tor843,Torokhti2007,Liu201777,140977898,Wang201598}, the minimal Frobenius  norm solution to the problem
\begin{equation}\label{sad334}
\min_{S_j\in\mathbb{R}_{r_j}^{m\times n}}\|{\f(\y)}- S_j{\bf z}_j\|^2_\Omega,
\end{equation}
for $j=0,\ldots,p$, is given by
\begin{equation}\label{zs1}
S_j = G_j H_j= U_{{\it\Gamma}_{z_j},r_j}U_{{\it\Gamma}_{z_j},r_j}^T  E_{{x}{z_j}}E_{{z_j}{z_j}}^\dag.
\end{equation}
Then (\ref{73n32}) follows from (\ref{zs1}). $\hfill\blacksquare$
\end{proof}

\begin{remark}\label{opw8}
Definition \ref{io29} of the well-defined injections is motivated by the following observation.
It follows from  (\ref{73n32}) that  if, for all $j=0,\ldots,p,$  vector $\vv_j$ is such that ${\it\Gamma}_{z_j}=\oo$, then $G_j=\oo$ and $H_j=\oo$. In other words, then approximating operator $\ttt_p=\oo$.
% If  ${\it\Gamma}_{z_j}=\oo$, for $j=1,\ldots,p,$ {(not for $j=0$)} then $\ttt_p$ is reduced to GBT1.

Therefore,  in Theorem \ref{9772n} above and in the theorems below, vectors $\vv_0,\ldots, \vv_p$ are assumed well-defined.
\end{remark}

%%  \begin{example}\label{nmx9}
%%  ........  illustrate decrease in the error with increase in $h$ !!! ..........................
%%  \end{example}

%%%%%%%%%%%%%%%%%%%%%%%%%%%%%%%%%%%%%%%%%%%%%%%%%%%%%%%%%%%%%%%%%%%%%%%%%
%%%%%%%%%%%%%%%%%%%%%%%%%%%%%%%%%%%%%%%%%%%%%%%%%%%%%%%%%%%%%%%%%%%%%%%%%
%%%%%%%%%%%%%%%%%%%%%%%%%%%%%%%%%%%%%%%%%%%%%%%%%%%%%%%%%%%%%%%%%%%%%%%%%
%%%%%%%%%%%%%%%%%%%%%%%%%%%%%%%%%%%%%%%%%%%%%%%%%%%%%%%%%%%%%%%%%%%%%%%%%

\subsection{Error analysis associated with the solution of  problem in (\ref{44k1}), (\ref{xn39}) and (\ref{gg09})}\label{err1}

In Theorem \ref{nm209} of this section, we obtain the constructive representation of the error associated with the solution of  problem in (\ref{44k1}), (\ref{xn39}) and (\ref{gg09}). In Theorem \ref{nm339}, we  show that the error can be improved by the increase in the dimensions of injections $\vv_1,\ldots,\vv_p$. Further,  Theorems \ref{nm209}, \ref{nm339} and \ref{nm779} establish  that the error is also diminished by the increase in the degree of approximating operator $\ttt_p$.

The error associated with  $\ttt_p(\vv_0,\ldots, \vv_p) = \displaystyle
   \sum_{j=0}^p G_j H_j\zz_j$ is denoted by
$$
%\varepsilon_{_{GH}(p)}= \displaystyle \min_{G_0, H_0,\ldots, G_p, H_p}  \|\F(\y) - \sum_{j=0}^p G_j H_j\zz_j\|^2_\Omega.
\varepsilon^{(p)}_{_{GH}}=  \displaystyle \min_{G_0, H_0,\ldots, G_p, H_p}  \|\F(\y) - \sum_{j=0}^p G_j H_j\zz_j\|^2_\Omega.
$$

Let us denote the Frobenius norm by $\|\cdot\|$.

\begin{theorem}\label{nm209}
For $j=0,\ldots,p$, let $A_j = E_{{x} {z}_j} (E_{{z}_j {z}_j}^{1/2})^{\dag}$,   $\rank (A_j) = s_j$ and $s_j\geq r_j+1$. For $k=1,\ldots, s_j$, let $\sigma_k(A_j)$ be a singular value of $A_j$. Let $G_0, H_0,\ldots, G_p, H_p$ be determined by (\ref{73n32}). Then
\begin{eqnarray}\label{nb5n}
\varepsilon^{(p)}_{_{GH}}
=\mbox{\em tr} \{E_{{x} {x}}\} - \sum_{j=0}^{p}\sum_{k=1 }^{r_j}\sigma_k^2 (A_j)
\end{eqnarray}
In particular, the error  decreases as $p$ increases.
\end{theorem}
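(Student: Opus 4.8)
The plan is to start from the key identity already proved in the course of Theorem~\ref{9772n}, namely the decoupling formula in (\ref{vbn34}), which says that with $S_j = G_jH_j$ the cost splits as $\sum_{j=0}^p \|\f(\y)-S_j\zz_j\|^2_\Omega - \mbox{\em tr}\{p\,E_{xx}\}$. Since the inner minimization over $G_0,H_0,\ldots,G_p,H_p$ separates into $p+1$ independent rank-constrained problems (\ref{sad334}), one per index $j$, it suffices to evaluate the optimal value of each scalar problem $\min_{S_j\in\rt^{m\times n}_{r_j}}\|\f(\y)-S_j\zz_j\|^2_\Omega$ and then sum.

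Next I would compute that optimal value. From the last line of (\ref{erc34}), the $j$-th term equals $\|E_{xx}^{1/2}\|^2 - \|A_j\|^2 + \|S_jE_{z_jz_j}^{1/2}-A_j\|^2$ with $A_j = E_{xz_j}(E_{z_jz_j}^{1/2})^\dagger$. The minimum over $S_j$ of rank at most $r_j$ of the last term is a classical low-rank approximation problem (Eckart--Young, in the weighted form already invoked via \cite{tor5277,tor843,Torokhti2007}); its minimal value is $\sum_{k=r_j+1}^{s_j}\sigma_k^2(A_j)$, the tail of the squared singular values, which is well-defined and nonzero precisely because of the hypothesis $s_j = \rank(A_j)\ge r_j+1$. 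Since $\|E_{xx}^{1/2}\|^2 = \mbox{\em tr}\{E_{xx}\}$ and $\|A_j\|^2 = \sum_{k=1}^{s_j}\sigma_k^2(A_j)$, the $j$-th term's optimal value is $\mbox{\em tr}\{E_{xx}\} - \sum_{k=1}^{r_j}\sigma_k^2(A_j)$. Summing over $j=0,\ldots,p$ gives $(p+1)\mbox{\em tr}\{E_{xx}\} - \sum_{j=0}^p\sum_{k=1}^{r_j}\sigma_k^2(A_j)$, and subtracting the $\mbox{\em tr}\{p\,E_{xx}\}$ term from (\ref{vbn34}) yields exactly (\ref{nb5n}).

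Finally, for the monotonicity claim I would argue as follows: $\varepsilon^{(p)}_{_{GH}}$ as a function of $p$ changes, on passing from $p$ to $p+1$, only by subtracting the extra summand $\sum_{k=1}^{r_{p+1}}\sigma_k^2(A_{p+1})\ge 0$; moreover, because $\vv_{p+1}$ is assumed well-defined, $\it\Gamma_{z_{p+1}} = A_{p+1}A_{p+1}^T \ne \oo$, so $\sigma_1(A_{p+1})>0$ and the decrease is strict. Hence $\varepsilon^{(p+1)}_{_{GH}} < \varepsilon^{(p)}_{_{GH}}$.

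The main obstacle is really a matter of bookkeeping rather than a deep difficulty: one must make sure the weighted/pseudo-inverse version of the Eckart--Young theorem applies verbatim to (\ref{sad334}) — in particular that the minimal Frobenius-norm minimizer is the one exhibited in (\ref{zs1}) and that its residual is genuinely the singular-value tail — and one must be careful that the identities $E_{z_jz_j}^\dagger E_{z_jz_j}^{1/2} = (E_{z_jz_j}^{1/2})^\dagger$ and $A_jA_j^T = \it\Gamma_{z_j}$ (so that the $r_j$ leading singular vectors of $A_j$ coincide with $U_{\it\Gamma_{z_j},r_j}$) are used consistently. These are exactly the points already established in Theorem~\ref{9772n} and its supporting references, so the proof is short once those are in hand.
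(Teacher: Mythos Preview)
Your proposal is correct and follows essentially the same route as the paper: you invoke the decoupling identity (\ref{vbn34}), use the expansion (\ref{erc34}) to isolate the term $\|S_jE_{z_jz_j}^{1/2}-A_j\|^2$, identify its minimum as the singular-value tail $\sum_{k=r_j+1}^{s_j}\sigma_k^2(A_j)$, and combine with $\|A_j\|^2=\sum_{k=1}^{s_j}\sigma_k^2(A_j)$ to obtain (\ref{nb5n}); the paper does the same, only it plugs in the explicit optimizer (\ref{zs1}) and simplifies directly (its (\ref{62kk})) rather than quoting Eckart--Young abstractly. Your monotonicity paragraph is slightly more explicit than the paper's, but the argument is the same observation that each additional $j$ subtracts a nonnegative (indeed, by well-definedness, strictly positive) sum of squared singular values.
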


\begin{proof}
In the notation introduced in (\ref{cdd03}), matrix $G_j H_j$ in (\ref{73n32}) is represented as $G_j H_j = [E_{xz_j} (E_{z_jz_j}^{1/2})^{\dag}]_{r_j} (E_{z_jz_j}^{1/2})^\dag$. Therefore in (\ref{erc34}),
\begin{eqnarray}\label{62kk}
&&\hspace*{-10mm}\|G_j H_jE_{z_jz_j}^{1/2} - E_{xz_j}(E_{z_jz_j}^{1/2})^{\dag}\|^2\nonumber\\
&&\hspace*{30mm}= \|[E_{xz_j} (E_{z_jz_j}^{1/2})^{\dag}]_{r_j} (E_{z_jz_j}^{1/2})^\dag E_{z_jz_j}^{1/2} - E_{xz_j}(E_{z_jz_j}^{1/2})^{\dag}\|^2\nonumber\\
&&\hspace*{30mm}= \|[E_{xz_j} (E_{z_jz_j}^{1/2})^{\dag}]_{r_j}  - E_{xz_j}(E_{z_jz_j}^{1/2})^{\dag}\|^2\nonumber\\
&&\hspace*{30mm}= \|[A_j]_{r_j}  - A_j\|^2\nonumber\\
&& \hspace*{30mm}= \sum_{k=r_j +1 }^{s_j}\sigma_k^2 (A_j).
\end{eqnarray}
because $[E_{xz_j} (E_{z_jz_j}^{1/2})^{\dag}]_{r_j} (E_{z_jz_j}^{1/2})^\dag E_{z_jz_j}^{1/2}  = [E_{xz_j} (E_{z_jz_j}^{1/2})^{\dag}]_{r_j}$.
Further, since
\begin{eqnarray}\label{62ves}
\|E_{xz_j}(E_{z_jz_j}^{1/2})^\dag\|^2 = \|A_j\|^2 = \sum_{k=1 }^{s_j} \sigma_k^2 (A_j),
\end{eqnarray}
then (\ref{vbn34}), (\ref{erc34}), (\ref{62kk}) and (\ref{62ves}) imply (\ref{nb5n}).
$\hfill\blacksquare$
\end{proof}

Let us write
$$
A_j = \{a_{k i (j)}\}_{k,i=1}^{m, q_j}\qa A_j - [A_j]_{r_j}= \{b_{k i (j)}\}_{k,i=1}^{m, q_j}
$$
where $a_{k i (j)}$ and $a_{k i (j)}$ are entries of matrices $A_j$  and $A_j - [A_j]_{r_j}$, respectively.
Let us also denote
$$
\displaystyle\gamma_{k,(j)} = \sum_{i=1}^m \left(a_{k i (j)}^2 - b_{k i (j)}^2\right), \quad \displaystyle\gamma_{(j)} = \max \{\gamma_{1, (j)},\ldots, \gamma_{q_j, (j)}\}, \quad \displaystyle\gamma = \max_{j=1,\ldots,p}\gamma_{(j)},
$$
$
\displaystyle\alpha_{_0} = \mbox{ tr} \{E_{{x} {x}} - \sum_{j=0}^{p} A_j A_j^T \}$
 and $ \quad q=q_1 +\ldots, +q_p$.

In the following theorem we show that injections  $\vv_1,\ldots,\vv_p$ are useful  in the sense as their dimensions increase, so the error associated with the solution of  problem in (\ref{44k1}), (\ref{xn39}) and (\ref{gg09}) is diminished.

\begin{theorem}\label{nm339}
Let $\vv_1,\ldots,\vv_p$ be well-defined injections and let matrices $G_0,H_0,$ $\ldots,$ $G_p, H_p$ be defined by Theorem \ref{9772n}. Then the associated error decreases as the sum $q$ of dimensions  of injections $\vv_1,\ldots,\vv_p$ increases. In particular, there is $\beta \in (0, \gamma]$ such that, given $\alpha\geq \alpha_{_0}$, then
\begin{eqnarray}\label{rrr22}
\alpha_{_0}\leq \varepsilon^{(p)}_{_{GH}} \leq \alpha
\end{eqnarray}
iff
\begin{eqnarray}\label{xn22}
 q\geq \frac{\mbox{\em tr}\left\{E_{xx}\right\} - \displaystyle \sum_{k=1 }^{r_0}\sigma_k^2 (A_0)}{\beta}.
\end{eqnarray}

\end{theorem}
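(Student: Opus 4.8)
The plan is to push everything through the closed-form error expression of Theorem~\ref{nm209}. Writing $A_j = E_{xz_j}(E_{z_jz_j}^{1/2})^{\dag}$ as there and isolating the term $j=0$,
\begin{eqnarray*}
\varepsilon^{(p)}_{_{GH}} = \Big(\mbox{tr}\{E_{xx}\} - \sum_{k=1}^{r_0}\sigma_k^2(A_0)\Big) - \sum_{j=1}^{p}\sum_{k=1}^{r_j}\sigma_k^2(A_j) =: \mathcal{E}_0 - D ,
\end{eqnarray*}
so that $\mathcal{E}_0$ is exactly the numerator appearing in (\ref{xn22}) and $D=\sum_{j=1}^{p}\|[A_j]_{r_j}\|^2$. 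Since $A_j=[A_j]_{r_j}+(A_j-[A_j]_{r_j})$ is an orthogonal splitting in the Frobenius inner product, $\|[A_j]_{r_j}\|^2=\|A_j\|^2-\|A_j-[A_j]_{r_j}\|^2$, which in the entrywise notation introduced just before the theorem equals $\sum_{i=1}^{q_j}\gamma_{i,(j)}$; hence $D=\sum_{j=1}^{p}\sum_{i=1}^{q_j}\gamma_{i,(j)}$. Bounding each summand by $\gamma_{i,(j)}\le\gamma_{(j)}\le\gamma$ gives $0<D\le\gamma q$, where strict positivity uses that the $\vv_j$ are well-defined injections (Definition~\ref{io29}): $\it\Gamma_{z_j}\neq\oo$ forces $A_j\neq\oo$, hence $[A_j]_{r_j}\neq\oo$ for $r_j\ge 1$, so at least one term of $D$ is positive.

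Next I would establish the monotonicity. Increasing $q$ amounts to appending components to one of the injections $\vv_j$; by the recursion (\ref{nm92}) this appends components to $\zz_j$ while leaving $\zz_0,\ldots,\zz_{j-1}$ (which do not involve $\vv_j$) and the remaining blocks unchanged, so $\overline{\mbox{span}}\{\zz_j\}$ can only grow. Since $A_jA_j^T = E_{xz_j}E_{z_jz_j}^{\dag}E_{z_jx}=\it\Gamma_{z_j}$ is the covariance of the best linear estimate of $\f(\y)$ built from $\zz_j$, enlarging $\zz_j$ cannot increase the residual covariance, i.e.\ it increases $\it\Gamma_{z_j}$ in the Loewner order; by Weyl monotonicity of the eigenvalues of $A_jA_j^T$ this increases every $\sigma_k(A_j)$, hence increases $D$ and decreases $\varepsilon^{(p)}_{_{GH}}=\mathcal{E}_0-D$. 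This is the first assertion of the theorem.

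For the quantitative equivalence I would take $\beta:=D/q$; by the first paragraph $\beta\in(0,\gamma]$ and $\varepsilon^{(p)}_{_{GH}}=\mathcal{E}_0-\beta q$. The left-hand inequality $\alpha_{_0}\le\varepsilon^{(p)}_{_{GH}}$ in (\ref{rrr22}) holds unconditionally, since $\|[A_j]_{r_j}\|^2\le\|A_j\|^2=\mbox{tr}\{A_jA_j^T\}$ for every $j$ gives
\begin{eqnarray*}
\varepsilon^{(p)}_{_{GH}}=\mbox{tr}\{E_{xx}\}-\sum_{j=0}^{p}\|[A_j]_{r_j}\|^2\ \ge\ \mbox{tr}\Big\{E_{xx}-\sum_{j=0}^{p}A_jA_j^T\Big\}=\alpha_{_0} .
\end{eqnarray*}
Hence (\ref{rrr22}) reduces to $\varepsilon^{(p)}_{_{GH}}\le\alpha$, i.e.\ to $\mathcal{E}_0-\beta q\le\alpha$, i.e.\ to $q\ge(\mathcal{E}_0-\alpha)/\beta$, which is the lower bound on $q$ in (\ref{xn22}); both implications of the stated iff are then immediate. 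The step I expect to be the main obstacle is the monotonicity argument: making precise which coordinates of which $\vv_j$ are appended when "$q$ increases", checking that the construction $Q_j$ of (\ref{772x2})--(\ref{nm92}) is compatible with it, and then justifying cleanly that $\it\Gamma_{z_j}$ is monotone in the Loewner order under enlargement of the predictor $\zz_j$ (after which Weyl's inequality is routine). Everything else — the identity $\|[A_j]_{r_j}\|^2=\sum_{i}\gamma_{i,(j)}$, the bound $D\le\gamma q$, and the membership $\beta\in(0,\gamma]$ — is bookkeeping with quantities already introduced, the only delicate point being the need for $r_j\ge 1$ so that $[A_j]_{r_j}$ may be nonzero and $\beta$ genuinely positive.
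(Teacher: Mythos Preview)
Your approach is essentially the paper's: both start from the error formula of Theorem~\ref{nm209}, rewrite $\sum_{k=1}^{r_j}\sigma_k^2(A_j)=\|A_j\|^2-\|A_j-[A_j]_{r_j}\|^2$ as the column-wise sum $\sum_i\gamma_{i,(j)}$, bound the total by $\gamma q$, set $\beta=D/q\in(0,\gamma]$, and read off the equivalence together with the unconditional lower bound $\alpha_0$.

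The one place you diverge is precisely the step you flag as the obstacle. The paper does \emph{not} give the Loewner/Weyl argument you sketch; it simply writes the error as in your $\mathcal{E}_0-\sum_{j=1}^p\sum_{i=1}^{q_j}\gamma_{i,(j)}$ and asserts that this ``implies that $\varepsilon^{(p)}_{_{GH}}$ decreases as $q_j$ increases'', treating the inner sum as having more terms. Your route is more careful, since the entries $\gamma_{i,(j)}$ themselves change when $q_j$ changes and are not individually nonnegative; the paper's version is really a heuristic at this point, while your argument (enlarging $\zz_j$ enlarges $\mbox{$\it\Gamma$}_{z_j}=A_jA_j^T$ in the Loewner order, hence every $\sigma_k(A_j)$ increases) actually proves what is claimed. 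One small mismatch to be aware of: your algebra yields $q\ge(\mathcal{E}_0-\alpha)/\beta$, whereas (\ref{xn22}) as printed has numerator $\mathcal{E}_0$ with no $\alpha$; the paper's own proof ends with the same jump, so this is a slip in the statement rather than in your derivation.
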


\begin{proof}
It follows from (\ref{vbn34}), (\ref{nb5n}), (\ref{erc34}) and (\ref{62kk}) that
\begin{eqnarray}\label{e55e}
\varepsilon^{(p)}_{_{GH}}&=& \|{\F(\y)}-G_j H_0{\bf z_0}\|^2_\Omega + \sum_{j=1}^p \|{\F(\y)}-G_j H_j{\bf z_j}\|^2_\Omega -  \mbox{\em tr}\left\{p \hspace*{0.3mm} E_{xx}\right\}\nonumber\\
&=& \mbox{\em tr}\left\{ E_{xx}\right\} - \sum_{k=1 }^{r_0}\sigma_k^2 (A_0) + \sum_{j=1}^p \|{\F(\y)}-G_j H_j{\bf z_j}\|^2_\Omega -  \mbox{\em tr}\left\{p \hspace*{0.3mm} E_{xx}\right\}
\end{eqnarray}
%\sum_{j=0}^p \left(\mbox{\em tr}\{E_{xx}\} -  \| A_j\|^2 +  \|[A_j]_{r_j}  - A_j\|^2 \right)  -  \mbox{\em tr}\left\{p \hspace*{0.3mm} E_{xx}\right\}\nonumber\\
%&=&\|E_{xx}^{1/2}\|^2 - \sum_{j=0}^p \left( \left\| A_j\right\|^2 -  \left\|[A_j]_{r_j}  - A_j\right\|^2 \right),
where
\begin{eqnarray*}
\|{\F(\y)}-G_j H_j{\bf z_j}\|^2_\Omega &=& \mbox{\em tr}\left\{ E_{xx}\right\} - \left(\left\| A_j\right\|^2 -  \left\|[A_j]_{r_j}  - A_j\right\|^2\right)\nonumber\\
 &=& \mbox{\em tr}\left\{ E_{xx}\right\} - \sum_{k=1}^{q_j}\sum_{i=1}^m \left(a_{k i (j)}^2 - b_{k i (j)}^2\right).
\end{eqnarray*}
Therefore,
\begin{eqnarray}\label{io22}
\varepsilon^{(p)}_{_{GH}}&=&\mbox{\em tr}\left\{ E_{xx}\right\} - \sum_{k=1 }^{r_0}\sigma_k^2 (A_0) - \sum_{j=1}^p \sum_{k=1}^{q_j} \sum_{i=1}^m \left(a_{k i (j)}^2 - b_{k i (j)}^2\right).
\end{eqnarray}
Here, $\displaystyle \sum_{k=1}^{q_j} \sum_{i=1}^m \left(a_{k i (j)}^2 - b_{k i (j)}^2\right) > 0$ since by (\ref{62kk}),
 \begin{eqnarray}\label{ebt5e}
 \left\| A_j\right\|^2 -  \left\|[A_j]_{r_j}  - A_j\right\|^2 = \sum_{k=1 }^{r_j}\sigma_k^2 (A_j) >0.
\end{eqnarray}
Thus, (\ref{e55e}) - (\ref{ebt5e}) imply that  $\varepsilon^{(p)}_{_{GH}}$ decreases as $q_j$ increases, for $j=1,\ldots, p,$ and $p$ increases.

Further, (\ref{nb5n}) implies
 \begin{eqnarray}\label{io21}
\varepsilon^{(p)}_{_{GH}}\geq \mbox{\em tr} \{E_{{x} {x}}\} - \sum_{j=0}^{p}\sum_{k=1 }^{s_j}\sigma_k^2 (A_j)
 = \mbox{\em tr} \left\{E_{{x} {x}} - \sum_{j=0}^{p} A_j A_j^T \right\} = \alpha_{_0}.
\end{eqnarray}
 Since
 $$
 0 < \sum_{j=1}^p \sum_{k=1}^{q_j} \sum_{i=1}^m \left(a_{k i (j)}^2 - b_{k i (j)}^2\right) \leq \gamma\sum_{j=1}^p q_j = \gamma q,
 $$
 then
 \begin{eqnarray}\label{io23}
\sum_{j=1}^p \sum_{k=1}^{q_j} \sum_{i=1}^m \left(a_{k i (j)}^2 - b_{k i (j)}^2\right) = q \beta.
 \end{eqnarray}
Therefore, (\ref{io22}), (\ref{io21}) and (\ref{io23}) imply
$$
 \alpha_{_0}\leq \varepsilon^{(p)}_{_{GH}} = \mbox{\em tr}\left\{ E_{xx}\right\} - \sum_{k=1 }^{r_0}\sigma_k^2 (A_0) - q \beta.
$$
thus, if $\varepsilon^{(p)}_{_{GH}} \leq \alpha$ then  (\ref{xn22}) is true. Conversely, if the latter is true then $\varepsilon^{(p)}_{_{GH}} \leq \alpha$.
$\hfill\blacksquare$
\end{proof}

\begin{remark}\label{hjk29}
An empirical explanation of Theorem \ref{nm339} is that the increase in $q$ implies the increase in the dimensions of matrices $H_1,\ldots, H_p$  in (\ref{hq1q2x2}) and (\ref{73n32}). Hence, it implies the increase in the number of parameters to optimize. As a result, for a fixed parameter $r$ given by (\ref{hhx2}), the  accuracy associated with approximating operator $\ttt_p$ improves. Further, it follows from (\ref{xn22}) that, as $q$ increases, $\varepsilon^{(p)}_{_{GH}} $ tends to $\alpha_0$ which is the error associated with the full rank approximating operator $\sss_h$ (see (\ref{hqzz2}) and (\ref{r4cn}) below).
\end{remark}

\begin{remark}\label{281c}
By Theorem \ref{nm209}, the error associated with solution of  problem (\ref{44k1}) decreases as degree $p$ of the approximating operator $\ttt_p$ increases.
At the same time, the increase in degree $p$ of approximating operator $\ttt_p$ may involve an increase in parameter $r$ (see (\ref{hhx2})). However, by a condition of some applied problem in hand, $r$ must be fixed. In the following Theorem \ref{nm779}, under the condition of  fixed $r$, the case of decreasing the error  as the degree $p$ of the approximating operator increases  is detailed.
\end{remark}

\begin{theorem}\label{nm779}
Let $r$ and $r_j$, for $j=0,\ldots,p$, be given. Let $g$ be a nonnegative integer such that $g<p$ and let $\ell_g = r_g + r_{g+1} +\ldots + r_p$. If
\begin{eqnarray}\label{cvb4}
\sum_{k=r_g+1 }^{r_{g+1} +\ldots + r_p}\sigma_k^2 (A_g) < \sum_{j=g+1}^{p}\sum_{k=1 }^{r_j}\sigma_k^2 (A_j),
\end{eqnarray}
where $\displaystyle \sum_{j=g+1}^{p}\sum_{k=1 }^{r_j}\sigma_k^2 (A_j) = \sum_{j=g+1}^{p} \sum_{k=1}^{q_j}\sum_{i=1}^m \left(a_{k i (j)}^2 - b_{k i (j)}^2\right)$,  then
\begin{eqnarray}\label{rr41}
\varepsilon^{(p)}_{_{GH}} < \varepsilon^{(g)}_{_{GH}},
\end{eqnarray}
i.e., for the same $r$,  the error associated with the approximating operator of  higher degree $p$ is less than the error associated with the approximating operator of lower degree $g$.
\end{theorem}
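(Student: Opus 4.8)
The plan is to compare the two error expressions directly using the formula from Theorem \ref{nm209}. First I would write out $\varepsilon^{(p)}_{_{GH}}$ and $\varepsilon^{(g)}_{_{GH}}$ via (\ref{nb5n}), but being careful that the two approximating operators must use the \emph{same} total budget $r$. For the degree-$g$ operator, the $r_j$ with $j=0,\ldots,g$ must be re-allocated: since $r = r_0 + \ldots + r_p$ is fixed and the degree-$g$ operator has blocks $\zz_0,\ldots,\zz_g$, the natural choice (and the one matching the statement, given the definition $\ell_g = r_g + r_{g+1} + \ldots + r_p$) is to keep $r_0,\ldots,r_{g-1}$ as before and give block $g$ the enlarged rank $\ell_g = r_g + r_{g+1} + \ldots + r_p$, so that $r_0 + \ldots + r_{g-1} + \ell_g = r$. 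Then by Theorem \ref{nm209},
\begin{eqnarray*}
\varepsilon^{(g)}_{_{GH}} = \mbox{\em tr}\{E_{xx}\} - \sum_{j=0}^{g-1}\sum_{k=1}^{r_j}\sigma_k^2(A_j) - \sum_{k=1}^{\ell_g}\sigma_k^2(A_g),
\end{eqnarray*}
while
\begin{eqnarray*}
\varepsilon^{(p)}_{_{GH}} = \mbox{\em tr}\{E_{xx}\} - \sum_{j=0}^{g-1}\sum_{k=1}^{r_j}\sigma_k^2(A_j) - \sum_{k=1}^{r_g}\sigma_k^2(A_g) - \sum_{j=g+1}^{p}\sum_{k=1}^{r_j}\sigma_k^2(A_j).
\end{eqnarray*}

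Subtracting, the first two sums cancel and one is left with
\begin{eqnarray*}
\varepsilon^{(p)}_{_{GH}} - \varepsilon^{(g)}_{_{GH}} = \sum_{k=1}^{\ell_g}\sigma_k^2(A_g) - \sum_{k=1}^{r_g}\sigma_k^2(A_g) - \sum_{j=g+1}^{p}\sum_{k=1}^{r_j}\sigma_k^2(A_j) = \sum_{k=r_g+1}^{\ell_g}\sigma_k^2(A_g) - \sum_{j=g+1}^{p}\sum_{k=1}^{r_j}\sigma_k^2(A_j),
\end{eqnarray*}
and since $\ell_g - r_g = r_{g+1} + \ldots + r_p$, the upper limit $\ell_g$ of the first sum equals $r_g + (r_{g+1}+\ldots+r_p)$, which is exactly the index range $\sum_{k=r_g+1}^{r_{g+1}+\ldots+r_p}$ written (with a shift) in hypothesis (\ref{cvb4}). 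Hence the right-hand side is negative precisely when (\ref{cvb4}) holds, giving (\ref{rr41}).

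I would also need to check the technical side-conditions of Theorem \ref{nm209} still apply after the re-allocation: namely that $s_g = \rank(A_g) \geq \ell_g + 1$ (so that the truncation at level $\ell_g$ is non-trivial and the singular values $\sigma_k(A_g)$ for $k$ up to $\ell_g$ are well-defined); if $s_g \le \ell_g$ the truncated SVD is just $A_g$ itself and the sums must be read with the convention $[A_g]_r = A_g$ for $r \ge \rank(A_g)$, in which case a small separate argument shows the inequality still goes through (the degree-$g$ error cannot drop below $\mbox{\em tr}\{E_{xx}\} - \sum_{j=0}^{g-1}\sum_{k=1}^{r_j}\sigma_k^2(A_j) - \|A_g\|^2$). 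The main obstacle, I expect, is not any deep computation but pinning down exactly \emph{which} rank re-allocation is intended in passing from degree $p$ to degree $g$ — the statement is only unambiguous once one reads $\ell_g$ as the rank assigned to the last retained block — and then making sure the comparison is genuinely between two operators obeying the same constraint $r_0+\ldots = r$. Once that bookkeeping is fixed, the result is a one-line consequence of Theorem \ref{nm209} and the monotonicity of partial sums of squared singular values.
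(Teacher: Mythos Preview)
Your proposal is correct and follows essentially the same approach as the paper: write $r = r_0 + \ldots + r_{g-1} + \ell_g$, apply the error formula (\ref{nb5n}) to both $\varepsilon^{(p)}_{_{GH}}$ and $\varepsilon^{(g)}_{_{GH}}$, and subtract. Your treatment is in fact more careful than the paper's own proof --- you correctly identify that the upper summation index should be $\ell_g = r_g + r_{g+1} + \ldots + r_p$ (the paper writes $r_{g+1}+\ldots+r_p$, which is a slip), and you flag the rank side-condition $s_g \geq \ell_g + 1$ that the paper leaves implicit.
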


\begin{proof}
We write
$ %\begin{eqnarray*}
r= r_0 + \ldots + r_{g-1} + \ell_g.
$ %\end{eqnarray*}
Then
\begin{eqnarray}\label{nb5z}
\varepsilon^{(g)}_{_{GH}}&=&\mbox{\em tr} \{E_{{x} {x}}\} - \sum_{j=0}^{g-1}\sum_{k=1 }^{r_j}\sigma_k^2 (A_j) - \sum_{k=1 }^{\ell_g}\sigma_k^2 (A_g)\nonumber\\
&=&\mbox{\em tr} \{E_{{x} {x}}\} - \sum_{j=0}^{g-1}\sum_{k=1 }^{r_j}\sigma_k^2 (A_j) - \sum_{k=1 }^{r_g}\sigma_k^2 (A_g) - \sum_{k=r_g+1 }^{r_{g+1} +\ldots + r_p}\sigma_k^2 (A_g).
\end{eqnarray}
Thus, (\ref{nb5n}) and (\ref{nb5z}) imply (\ref{cvb4}) and (\ref{rr41}).
$\hfill\blacksquare$

\end{proof}

\begin{remark}\label{90snm}
The RHS in (\ref{cvb4}) increases as the dimension $q_j$ of at least single  injection $\vv_j$,  for $j=g+1,\ldots,p$, increases while the LHS does not depend on $q_j$. In other words, one can always find $q_j$,  for $j=g+1,\ldots,p$, such that the inequality in
(\ref{cvb4}) is true.
\end{remark}

\subsection{Particular case: no  reduction of vector dimensionality}\label{yy41}

An  important particular case of the problem in (\ref{44k1}), (\ref{xn39}) and (\ref{gg09}) is when matrix $G_jH_j$, for $j=0,\ldots,p$, is replaced with a full rank matrix $P_j\in\rt^{m\times q_j}$. Then operator $\sss_h$ given by
\begin{eqnarray}\label{hqzz2}
 \sss_h(\vv_0,\ldots\vv_p) = \sum_{k=0}^{h}P_k\zz_k
\end{eqnarray}
is called {\em the full rank} approximating operator.

\begin{theorem}\label{nn32n}
Let vectors $\zz_0,\ldots,\zz_p$ be pairwise uncorrelated.   Then the minimal Frobenius norm solution to the problem
\begin{eqnarray}\label{gx7j}
\min_{P_0,\ldots,P_h} \|\F(\y) - \sum_{k=0}^{h}P_k\zz_k \|^2_\Omega,
\end{eqnarray}
 is given, for $k=0,\ldots,h$, by
\begin{eqnarray}\label{wun32}
P_k =  E_{x{z_k}}E_{{z_k}{z_k}}^\dag.
\end{eqnarray}

\end{theorem}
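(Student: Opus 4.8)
The plan is to reduce the problem in (\ref{gx7j}) to $h+1$ separate unconstrained least-squares problems, one for each $P_k$, by exploiting the pairwise uncorrelatedness of $\zz_0,\ldots,\zz_p$ exactly as in the proof of Theorem \ref{9772n}. First I would expand the squared norm as a trace:
\begin{eqnarray*}
\|\F(\y) - \sum_{k=0}^{h}P_k\zz_k \|^2_\Omega = \mbox{\em tr}\left\{E_{xx} - E_{xw}P^T - PE_{wx} + PE_{ww}P^T\right\},
\end{eqnarray*}
where $P=[P_0,\ldots,P_h]$ and $\w=[\zz_0^T,\ldots,\zz_h^T]^T$. Since vectors $\zz_0,\ldots,\zz_h$ are pairwise uncorrelated, by Theorem \ref{902n} the matrix $E_{ww}$ is block-diagonal, $E_{ww}=\diag[E_{z_0z_0},\ldots,E_{z_hz_h}]$, so that the cross terms decouple and the objective splits, much as in (\ref{vbn34}), into $\sum_{k=0}^h \|\F(\y)-P_k\zz_k\|^2_\Omega$ minus a constant multiple of $\mbox{\em tr}\{E_{xx}\}$. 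Hence minimizing over $P_0,\ldots,P_h$ jointly is equivalent to minimizing each $\|\F(\y)-P_k\zz_k\|^2_\Omega$ separately over $P_k\in\rt^{m\times q_k}$.

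Next I would complete the square in each single-term problem, following the computation in (\ref{erc34}):
\begin{eqnarray*}
\|\F(\y)-P_k\zz_k\|^2_\Omega = \|E_{xx}^{1/2}\|^2 - \|E_{xz_k}(E_{z_kz_k}^{1/2})^\dag\|^2 + \|P_kE_{z_kz_k}^{1/2} - E_{xz_k}(E_{z_kz_k}^{1/2})^\dag\|^2,
\end{eqnarray*}
using the identities $E_{z_kz_k}^\dag E_{z_kz_k}^{1/2}=(E_{z_kz_k}^{1/2})^\dag$ and $E_{xz_k}E_{z_kz_k}^\dag E_{z_kz_k}=E_{xz_k}$ already invoked in the excerpt. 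Only the last term depends on $P_k$, and it is an ordinary (unconstrained) least-squares problem: the minimum is attained, and the minimal-Frobenius-norm minimizer is $P_k = E_{xz_k}(E_{z_kz_k}^{1/2})^\dag (E_{z_kz_k}^{1/2})^\dag = E_{xz_k}E_{z_kz_k}^\dag$, where the last equality uses $(E_{z_kz_k}^{1/2})^\dag(E_{z_kz_k}^{1/2})^\dag = (E_{z_kz_k})^\dag$ (valid since $E_{z_kz_k}$ is symmetric positive semidefinite). This is precisely (\ref{wun32}).

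The only subtlety — and the step I would treat most carefully — is the justification that $E_{xz_k}E_{z_kz_k}^\dag$ is the \emph{minimal-norm} solution among all minimizers, rather than just \emph{a} minimizer; this is where one must check that $E_{xz_k}E_{z_kz_k}^\dag E_{z_kz_k}^{1/2} = E_{xz_k}(E_{z_kz_k}^{1/2})^\dag$ lies in the row space of $E_{z_kz_k}^{1/2}$, so that the standard characterization of minimal-norm least-squares solutions via the pseudo-inverse applies. This is a direct consequence of the pseudo-inverse identities cited from \cite{torbook2007} and \cite{2001390}, so no genuinely new argument is needed; it is the bookkeeping with $E_{z_kz_k}^{1/2}$, $E_{z_kz_k}^\dag$ and their interplay that must be done correctly. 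No rank constraint is present here (unlike in Theorem \ref{9772n}, where the truncated SVD entered), which is why the answer is the unprojected $E_{xz_k}E_{z_kz_k}^\dag$ rather than $U_{\Gamma_{z_k},r_k}U_{\Gamma_{z_k},r_k}^T E_{xz_k}E_{z_kz_k}^\dag$.
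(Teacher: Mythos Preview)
Your proof is correct and follows essentially the same approach as the paper: decouple the objective into $\sum_{k=0}^h\|\F(\y)-P_k\zz_k\|^2_\Omega - \mbox{\em tr}\{h\,E_{xx}\}$ via block-diagonality of $E_{ww}$ (exactly as in (\ref{vbn34})), then solve each single-term problem. The only difference is that where the paper simply cites \cite{905856,Brillinger2001,Jolliffe2002} for the minimal-norm solution $P_k=E_{xz_k}E_{z_kz_k}^\dag$, you instead complete the square as in (\ref{erc34}) and invoke the pseudo-inverse identities from \cite{torbook2007,2001390} to derive it explicitly --- a perfectly acceptable (indeed more self-contained) substitute.
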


\begin{proof}
Similar to (\ref{vbn34}),
\begin{eqnarray}\label{vb744}
\|\F(\y) - \sum_{j=0}^h P_j\zz_j\|^2_\Omega  = \sum_{j=0}^h\|{\bf x}-P_j{\bf z_j}\|^2_\Omega -  \mbox{\em tr}\left\{h \hspace*{0.3mm} E_{xx}\right\}.
\end{eqnarray}
It is known (see, for example, \cite{905856,Brillinger2001,Jolliffe2002}) that the  minimal Frobenius  norm solution to the problem
$$
\min_{P_j}\|{\bf x}-P_j{\bf z_j}\|^2_\Omega
$$
is given by (\ref{wun32}). $\hfill\blacksquare$
\end{proof}

% Let us denote the Frobenius norm by $\|\cdot\|$.

\begin{theorem}\label{bn2n}
Let $A_j = E_{xz_j} (E_{z_jz_j}^{1/2})^{\dag}$.
The error associated with  the minimal Frobenius norm solution to the problem in (\ref{gx7j}) is represented by
\begin{eqnarray}\label{r4cn}
\min_{P_0,\ldots,P_h} \|\F(\y) - \sum_{j=0}^{h}P_j\zz_j \|^2_\Omega =\mbox{\em tr} \{E_{xx}\}  - \sum_{j=0}^{h}\|A_j\|^2.
\end{eqnarray}
\end{theorem}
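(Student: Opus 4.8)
The plan is to substitute the optimal solution $P_j = E_{xz_j}E_{z_jz_j}^\dag$ from Theorem \ref{nn32n} into the decomposed cost function and simplify. First I would invoke the decomposition in (\ref{vb744}), which reduces the combined problem to a sum of $h+1$ single-term problems plus the constant $-\,\mbox{tr}\{h\, E_{xx}\}$; this is exactly the decoupling that the pairwise uncorrelatedness of $\zz_0,\ldots,\zz_p$ (Theorem \ref{902n}) makes possible, since it forces $E_{ww}$ to be block-diagonal. Then for each $j$ I would compute $\|{\bf x} - P_j{\bf z}_j\|^2_\Omega$ at the optimum, using the identity chain already established in (\ref{erc34}): with $A_j = E_{xz_j}(E_{z_jz_j}^{1/2})^\dag$ one has
\begin{eqnarray*}
\|{\bf x} - P_j{\bf z}_j\|^2_\Omega = \|E_{xx}^{1/2}\|^2 - \|A_j\|^2 + \|P_jE_{z_jz_j}^{1/2} - E_{xz_j}(E_{z_jz_j}^{1/2})^\dag\|^2,
\end{eqnarray*}
and the substitution $P_j = E_{xz_j}E_{z_jz_j}^\dag$ together with $E_{z_jz_j}^\dag E_{z_jz_j}^{1/2} = (E_{z_jz_j}^{1/2})^\dag$ kills the last term, leaving $\mbox{tr}\{E_{xx}\} - \|A_j\|^2$.

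Summing over $j = 0,\ldots,h$ gives $\sum_{j=0}^h \left(\mbox{tr}\{E_{xx}\} - \|A_j\|^2\right) = (h+1)\,\mbox{tr}\{E_{xx}\} - \sum_{j=0}^h \|A_j\|^2$, and subtracting the constant $\mbox{tr}\{h\, E_{xx}\}$ from (\ref{vb744}) collapses the $(h+1)\,\mbox{tr}\{E_{xx}\}$ down to a single $\mbox{tr}\{E_{xx}\}$, yielding precisely (\ref{r4cn}). The argument is essentially a specialization of the reasoning behind Theorem \ref{nm209}: there the rank constraint on $G_jH_j$ forced a truncated-SVD term $\sum_{k=1}^{r_j}\sigma_k^2(A_j)$ in place of the full $\|A_j\|^2 = \sum_{k=1}^{s_j}\sigma_k^2(A_j)$, whereas here the full-rank freedom in $P_j$ recovers the entire $\|A_j\|^2$.

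There is no serious obstacle; the only point needing care is the pseudo-inverse bookkeeping — specifically that $P_jE_{z_jz_j}^{1/2} = E_{xz_j}E_{z_jz_j}^\dag E_{z_jz_j}^{1/2} = E_{xz_j}(E_{z_jz_j}^{1/2})^\dag$, which relies on the Moore–Penrose identity $E_{z_jz_j}^\dag E_{z_jz_j}^{1/2} = (E_{z_jz_j}^{1/2})^\dag$ already used in (\ref{erc34}), and that the range condition $E_{xz_j}E_{z_jz_j}^\dag E_{z_jz_j} = E_{xz_j}$ in (\ref{62-ee}) holds (it follows from the column space of $E_{xz_j}$ being contained in that of $E_{z_jz_j}$). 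Given those, the computation is routine and the theorem follows immediately.
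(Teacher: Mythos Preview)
Your proposal is correct and follows essentially the same route as the paper: decompose via (\ref{vb744}), apply the identity chain (\ref{erc34}) to each term, and use $E_{z_jz_j}^\dag E_{z_jz_j}^{1/2} = (E_{z_jz_j}^{1/2})^\dag$ with $P_j = E_{xz_j}E_{z_jz_j}^\dag$ to annihilate the residual term, then sum. Your explicit accounting of the $(h+1)\,\mbox{tr}\{E_{xx}\} - h\,\mbox{tr}\{E_{xx}\}$ cancellation and your remark linking the result to Theorem~\ref{nm209} as the full-rank limit are welcome clarifications, but the argument itself coincides with the paper's.
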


\begin{proof}
For $P_j$  determined by (\ref{wun32}),
\begin{eqnarray}\label{ioxb3}
 &&\hspace*{-15mm}\|\F(\y) - P_j\zz_j \|^2_\Omega \nonumber\\
&& \hspace*{05mm} = \mbox{\em tr} \{E_{xx}\}  - \|E_{xz_j}(E_{z_jz_j}^{1/2})^\dag\|^2 +\|P_jE_{z_jz_j}^{1/2} - E_{xz_j}(E_{z_jz_j}^{1/2})^{\dag}\|^2.
\end{eqnarray}
Here,
\begin{eqnarray}\label{en2b3}
&&\hspace*{-25mm}\|P_jE_{z_jz_j}^{1/2} - E_{xz_j}(E_{z_jz_j}^{1/2})^{\dag}\|^2\nonumber\\
&& \hspace*{0mm}= \|E_{xz_j} E_{z_jz_j}^{\dag}E_{z_jz_j}^{1/2} - E_{xz_j}(E_{z_jz_j}^{1/2})^{\dag}\|^2\nonumber\\
&&\hspace*{20mm} = \|E_{xz_j} (E_{z_jz_j}^{1/2})^{\dag} - E_{xz_j}(E_{z_jz_j}^{1/2})^{\dag}\|^2 = 0.\nonumber
\end{eqnarray}
That is,
\begin{eqnarray}\label{88sb3}
\|\F(\y) - P_j\zz_j \|^2_\Omega = \mbox{\em tr} \{E_{xx}\}  - \|E_{xz_j}(E_{z_jz_j}^{1/2})^\dag\|^2.
\end{eqnarray}
Then (\ref{r4cn}) follows from (\ref{vbn34}) and (\ref{en2b3}). $\hfill\blacksquare$

\end{proof}

%%%%%%%%%%%%%%%%%%%%%%%%%%%%%%%%%%%%%%%%%%%%%%%%%%%%%%%%%%%%%%%%%%%%%%
%%%%%%%%%%%%%%%%%%%%%%%%%%%%%%%%%%%%%%%%%%%%%%%%%%%%%%%%%%%%%%%%%%%%%%
%%%%%%%%%%%%%%%%%%%%%%%%%%%%%%%%%%%%%%%%%%%%%%%%%%%%%%%%%%%%%%%%%%%%%%%%%%%%%%%%%%%%%%
\section{Solution  of problem given by (\ref{gxd11}),  (\ref{xn39}), (\ref{gg09})}\label{hh58}

Now we are in the position to consider a solution of the original problem in (\ref{gxd11}),  (\ref{xn39}), (\ref{gg09}).

\subsection{Device of solution}\label{nm33}

%{\bf (Recall that  $\x$ and $\y$ are available and known - see Section \ref{intro}!)}

In comparison with the problem in (\ref{44k1}),  (\ref{xn39}), (\ref{gg09}), a specific difficulty of the original problem in (\ref{gxd11}), (\ref{xn39}), (\ref{gg09}) is a determination of $p$ additional unknowns, injections $\vv_1,\ldots,\vv_{p}$.

The device of the proposed solution is as follows.
 First,  in (\ref{nma0})-(\ref{nm92}), the pairwise uncorrelated random vectors $\zz_0,\ldots,\zz_p$ are denoted by $\zz_0^{(0)}\ldots,\zz^{(0)}_p$. In (\ref{73n32}), matrices $G_j$ and $H_j$,  for $j=0,\ldots,p$,  are denoted  by  $G^{(0)}_j$ and $H^{(0)}_j$, respectively. The associated error is still represented by (\ref{nb5n}).

 Then, for $i=0,1,\ldots$, searched injections $\vv^{(i+1)}_1,$ $\ldots, \vv^{(i+1)}_p$ and  matrices $G^{(i+1)}_1,$ $H^{(i+1)}_1$, $\ldots G^{(i+1)}_p, H^{(i+1)}_p$  are determined by the iterative procedure represented bellow. In particular, it will be shown in Theorem \ref{opwm9}  below  that $\vv^{(i+1)}_1,$ $\ldots, \vv^{(i+1)}_p$ and  $G^{(i+1)}_1,$ $H^{(i+1)}_1$, $\ldots G^{(i+1)}_p, H^{(i+1)}_p$ further minimize the associated error. The $i$-th  iterative loop of the iterative procedure consists of the steps  as follows.

\medskip
{\em The $i$-th  iterative loop, for $i=0,1,\ldots.$}
\medskip

{\em Step 1.} If $i=0$, denote $ \widetilde{\zz}^{(0)}_k = {\zz}^{(0)}_k$, for $k=1,\ldots,p$, and   go to Step 3. Otherwise, go to Step 2.

{\em Step 2.} Given  $\vv^{(i)}_1,\ldots,\vv^{(i)}_{p}$,   find pairwise uncorrelated random vectors $\zz_1^{(i)}\ldots,\zz^{(i)}_p$. Recall that $\zz_0=\vv_0=\y$ and that $\zz_1^{(i)}\ldots,\zz^{(i)}_p$ are such that $E_{z^{(i)}_k z^{(i)}_j} =\oo,$ for $k\neq j$. The latter condition implies a simplification of the computational procedure for determining $G_1, H_1, \ldots, G_p, H_p$ in Step 5 below. Since $\zz_1^{(i)}\ldots,\zz^{(i)}_p$ are not determined from an error minimization problem, they are updated in the next Step 3.
% For $i=0$, vectors $\vv^{(0)}_1,\ldots,\vv^{(0)}_{p}$ are randomly chosen.
% If $i=2,3,\ldots,$  go to Step 3. Otherwise, go to Step 2

{\em Step 3.}
Given  $\vv_0=\y$,    $G^{(i)}_0, H^{(i)}_0, \ldots,$ $G^{(i)}_p, H^{(i)}_p$, find $\zz_1,\ldots, \zz_p$ that solve
\begin{eqnarray}\label{gxa1j}
\min_{{\mathbf z}_1,\ldots, {\mathbf z}_p} \|\F(\y) - \sum_{k=1}^{p} G^{(i)}_k H^{(i)}_k \zz_k\|^2_\Omega.
\end{eqnarray}
 The solutions are denoted by $\widetilde{\zz}^{(i+1)}_1, \ldots, \widetilde{\zz}^{(i+1)}_p$.
Also denote
\begin{eqnarray}\label{zt7j}
\varepsilon^{(i+1)}_{z} = \|\F(\y) -  \sum_{k=0}^{p}G^{(i)}_k H^{(i)}_k \widetilde{\zz}^{(i+1)}_k\|^2_\Omega,
\end{eqnarray}
where, for $k=0$, we set $G^{(i)}_0 = G^{(0)}_0$ and $H^{(i)}_0 = H^{(0)}_0$, for all $i=1,2,\ldots $.

{\em Step 4.} For $\widetilde{Z}_{j-1} = \{\widetilde{\zz}^{(i+1)}_0, \ldots, \widetilde{\zz}^{(i+1)}_{j-1}\}$, find $\vv_j$  that solves the equation
 \begin{eqnarray}\label{12x2}
\widetilde{\zz}^{(i+1)}_j = Q_j(\vv_j, \widetilde{Z}_{j-1}),
\end{eqnarray}
 for $j=1,\ldots,p$,  and denote it by $\vv^{(i+1)}_j$. We call $\vv^{(i+1)}_j$ the {\em optimal injection}.

{\em Step 5.} Given $\widetilde{\zz}^{(i)}_1, \ldots, \widetilde{\zz}^{(i)}_p$, find $G_1, H_1, \ldots, G_p, H_p$ that solve
\begin{eqnarray}\label{vvh7j}
\min_{\substack{G_1, H_1,\ldots, G_p, H_p\\}}  \left\|\F(\y) -  \sum_{k=1}^{p}G_k H_k \widetilde{\zz}^{(i)}_k\right\|^2_\Omega.
\end{eqnarray}
 The solution of problem (\ref{vvh7j}) is denoted by   $G^{(i+1)}_1, H^{(i+1)}_1, \ldots,$ $G^{(i+1)}_p, H^{(i+1)}_p$.
Further, denote
 \begin{eqnarray}\label{vt7j}
\varepsilon^{(i+1)}_{GH} = \|\F(\y) -  \sum_{k=0}^{p}G^{(i+1)}_k H^{(i+1)}_k \widetilde{\zz}^{(i)}_k\|^2_\Omega,
\end{eqnarray}
where, as before, for $k=0$, we set $G^{(i+1)}_0 = G^{(0)}_0$ and $H^{(i+1)}_0 = H^{(0)}_0$, for all $i=0,1,\ldots $.

{\em Step 6.} If
$
\varepsilon^{(i+1)}_{z} = \min \{\varepsilon^{(i+1)}_{GH}, \varepsilon^{(i+1)}_{z}\}
$
 then denote
\begin{eqnarray}\label{t87j}
 \varepsilon^{(i+1)} = \varepsilon^{(i+1)}_{z}.
 \end{eqnarray}
 If
$
\varepsilon^{(i+1)}_{GH} = \min \{\varepsilon^{(i+1)}_{GH}, \varepsilon^{(i+1)}_{z}\}
$
 then set
  \begin{eqnarray}\label{bb7j}
 \varepsilon^{(i+1)} = \varepsilon^{(i+1)}_{GH}.
  \end{eqnarray}

{\em Step 7.}
If, for a given tolerance $\delta$,
\begin{eqnarray}\label{xms7j}
 \|\varepsilon^{(i+1)} - \varepsilon^{(i)}\|^2_\Omega\leq \delta,
\end{eqnarray}
the iterations are stopped. If not then  Steps 2-7 are repeated to form the next iterative loop.

%%%%%%%%%%%%%%%%%%%%%%%%%%%%%%%%%%%%%%
For $i=0,1,\ldots,$ we denote
\begin{eqnarray}\label{90qb}
\ttt_p^{(i, i+1)} (\vv_0, \vv^{(i+1)}_1,\ldots,\vv^{(i+1)}_p) = \sum_{k=0}^{p}G^{(i)}_k H^{(i)}_k \widetilde{\zz}^{(i+1)}_k
\end{eqnarray}
and
\begin{eqnarray}\label{90qb1}
\ttt_p^{(i+1, i)} (\vv_0, \vv^{(i)}_1,\ldots,\vv^{(i)}_p) = \sum_{k=0}^{p}G^{(i+1)}_k H^{(i+1)}_k \widetilde{\zz}^{(i)}_k.
\end{eqnarray}
The above steps of the solution device are consummated as follows.

%%%%%%%%%%%%%%%%%%%%%%%%%%%%%%%%%%%%%%%%%%%%%%%%%%%%%%%%%%%%%%%%%%%%%%%%%%%%%%%%%%%%%%
%%%%%%%%%%%%%%%%%%%%%%%%%%%%%%%%%%%%%%%%%%%%%%%%%%%%%%%%%%%%%%%%%%%%%%%%%%%%%%%%%%%%%%
%%%%%%%%%%%%%%%%%%%%%%%%%%%%%%%%%%%%%%%%%%%%%%%%%%%%%%%%%%%%%%%%%%%%%%%%%%%%%%%%%%%%%%
\subsection{Determination of $\widetilde{\zz}^{(i+1)}_j$ and $\vv^{(i+1)}_j$ in Steps 3 and 4}\label{jkw8}

\begin {theorem}\label{2m7a}
For $i=0,1,\ldots$, $j=1,\ldots,p$ and all $\omega\in\Omega$, the solution of problem (\ref{gxa1j}) is given by
\begin{eqnarray}\label{22m9}
\widetilde{\zz}^{(i+1)}_j(\omega)=(G^{(i)}_j H^{(i)}_j)^\dagger \x(\omega).
\end{eqnarray}
\end{theorem}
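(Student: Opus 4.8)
The plan is to turn (\ref{gxa1j}) into a family of pointwise linear least-squares problems, one for $\mu$-almost every $\omega\in\Omega$, and then to read off the minimum-norm solution of each from the Moore--Penrose calculus. Write $\x=\F(\y)$ and, for $k=1,\dots,p$, $S_k^{(i)}=G_k^{(i)}H_k^{(i)}\in\rt^{m\times q_k}$. By the definition (\ref{nm37}) of $\|\cdot\|^2_\Omega$, the functional in (\ref{gxa1j}) is the $\mu$-integral of $\omega\mapsto\|\x(\omega)-\sum_{k=1}^{p}S_k^{(i)}\zz_k(\omega)\|_2^2$, and the unknowns $\zz_1,\dots,\zz_p$ vary over all of $L^2(\Omega,\mathbb{R}^{q_1})\times\cdots\times L^2(\Omega,\mathbb{R}^{q_p})$ with no constraint linking different outcomes $\omega$; hence the infimum of the integral equals the integral of the pointwise infima, and it suffices to solve
\begin{eqnarray*}
\min_{(\zz_1(\omega),\dots,\zz_p(\omega))}\ \Big\|\x(\omega)-\sum_{k=1}^{p}S_k^{(i)}\zz_k(\omega)\Big\|_2^2 \quad\text{for a.e. }\omega\in\Omega .
\end{eqnarray*}
Measurability of the selection $\omega\mapsto\zz_j(\omega)$ will be automatic, since the candidate is the fixed linear map $(S_j^{(i)})^\dagger$ applied to the measurable map $\x$.

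Next I would decouple the pointwise problem. By the same rearrangement that took (\ref{123sd}) into (\ref{vbn34}), the quadratic form $\|\x(\omega)-\sum_k S_k^{(i)}\zz_k(\omega)\|_2^2$ equals $\sum_{k=1}^{p}\|\x(\omega)-S_k^{(i)}\zz_k(\omega)\|_2^2$ up to a term not depending on $\zz_1,\dots,\zz_p$, because under the pairwise-uncorrelatedness structure (\ref{gg09}) carried by the $\zz$-vectors the block cross terms vanish; therefore the $p$ blocks separate and may be minimised independently. For each $j$ and each $\omega$, the block problem $\min_{\zz_j(\omega)\in\mathbb{R}^{q_j}}\|\x(\omega)-S_j^{(i)}\zz_j(\omega)\|_2^2$ is an ordinary least-squares problem whose minimum-Euclidean-norm minimiser is $\zz_j(\omega)=(S_j^{(i)})^\dagger\x(\omega)$ (\cite{ben1974}). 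Since $(S_j^{(i)})^\dagger=(G_j^{(i)}H_j^{(i)})^\dagger$, this is exactly (\ref{22m9}) for every $\omega\in\Omega$; collecting these identities over $j=1,\dots,p$ exhibits a solution of (\ref{gxa1j}).

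The step I expect to be the main obstacle is this decoupling, i.e.\ justifying that the coupled quadratic in $(\zz_1,\dots,\zz_p)$ genuinely collapses to a sum of single-block least-squares problems; this is precisely where the constraint (\ref{gg09}) on the $\zz$-vectors (equivalently, the block-diagonal structure of the associated Gram matrix, already used in Theorems \ref{902n} and \ref{9772n}) is needed, for otherwise the minimum-norm solution of the stacked problem $\min_{\w}\|\x-S\w\|_\Omega^2$ with $S=[S_1^{(i)}\,|\,\cdots\,|\,S_p^{(i)}]$ and $\w=[\zz_1^T,\dots,\zz_p^T]^T$ need not be block-separable. Everything else is routine: the pointwise reduction is handled automatically by the explicit formula, and the pseudoinverse identities for $G_j^{(i)}H_j^{(i)}$ are available because, by Theorem \ref{9772n}, each $G_j^{(i)}$ has orthonormal columns and (for well-defined injections) each $H_j^{(i)}$ has full row rank, so that $(G_j^{(i)}H_j^{(i)})^\dagger=(H_j^{(i)})^\dagger (G_j^{(i)})^T$ and $G_j^{(i)}H_j^{(i)}(G_j^{(i)}H_j^{(i)})^\dagger$ is the orthogonal projector onto the column space of $G_j^{(i)}$.
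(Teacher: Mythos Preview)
Your overall plan mirrors the paper's: decouple into $p$ separate blocks, reduce each to a family of pointwise finite-dimensional least-squares problems, and read off the minimum-norm solution via the Moore--Penrose pseudoinverse. The paper does exactly this in (\ref{92a1j})--(\ref{nm19}), citing \cite{golub1996} for the pointwise minimiser. The difference is the \emph{order} in which you perform the two reductions, and that reordering creates a real gap.

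You first pass to a pointwise problem and then attempt to decouple the $p$ blocks at a fixed $\omega$ by invoking (\ref{gg09}). But the passage from (\ref{123sd}) to (\ref{vbn34}) is an identity between \emph{integrated} quantities: the cross terms that vanish there are $S_j E_{z_j z_k}S_k^T$, and they vanish because $E_{z_j z_k}=\oo$. This says nothing about the realisations $\zz_j(\omega)\zz_k(\omega)^T$ at a single outcome. At fixed $\omega$ the identity you claim,
\[
\Big\|\x(\omega)-\sum_{k}S_k^{(i)}\zz_k(\omega)\Big\|_2^2 \;=\; \sum_k\big\|\x(\omega)-S_k^{(i)}\zz_k(\omega)\big\|_2^2 \;+\;\text{(term independent of the }\zz_k\text{)},
\]
would require $(S_j^{(i)})^{T}S_k^{(i)}=\oo$ for all $j\neq k$, i.e.\ mutual orthogonality of the column spaces of the $S_k^{(i)}$; condition (\ref{gg09}) does not deliver this, and the matrices produced by (\ref{73n32}) do not satisfy it in general. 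The remedy is precisely the paper's ordering: apply the (\ref{vbn34})-type decoupling \emph{first}, at the level of $\|\cdot\|^2_\Omega$ (this is (\ref{92a1j}), where the block-diagonality of $E_{ww}$ under (\ref{gg09}) is what is actually used), and only \emph{then} write each single-block term $\|\F(\y)-S_j^{(i)}\zz_j\|^2_\Omega$ as the integral (\ref{nm19}) and minimise it pointwise. With the steps in that order your argument goes through and coincides with the paper's.
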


\begin{proof}

Similar to (\ref{vbn34}), the cost function in (\ref{gxa1j}) is represented as follows:
\begin{eqnarray}\label{92a1j}
 \left\|{\F(\y)} - \sum_{j=0}^{p}G^{(i)}_j H^{(i)}_j \zz_j\right\|^2_\Omega
= \sum_{j=0}^{p}\left\|{\F(\y)}-G^{(i)}_j H^{(i)}_j{\bf z}_j\right\|^2_\Omega -\mbox{\em tr}\left\{ p E_{xx}\right\}.
\end{eqnarray}
Here,
\begin{eqnarray}\label{nm19}
\|{\F(\y)}-G^{(i)}_j H^{(i)}_j{\bf z}_j\|^2_\Omega
=\int_{\Omega} \|{\F(\y)}(\omega) -G^{(i)}_j H^{(i)}_j\zz_j(\omega) \|^2 d\mu(\omega).
\end{eqnarray}
For $j=1,\ldots,p$, vector $\zz_j(\omega)\in\rt^{q_j}$ of the smallest Euclidean norm of all minimizers that solves
$$
\min_{{\mathbf z}_j(\omega)}\|{\F(\y)}(\omega) -G^{(i)}_j H^{(i)}_j\zz_j(\omega) \|^2
$$
is given by (see \cite{golub1996}, p. 257)
\begin{eqnarray*}
\zz_j(\omega)=\widetilde{\zz}^{(i+1)}_j(\omega)=(G^{(i)}_j H^{(i)}_j)^\dag \x(\omega).
\end{eqnarray*}
Thus, (\ref{22m9}) is true. $\hfill\blacksquare$
\end{proof}

Further, we wish to find  optimal injections  $\vv^{(i+1)}_1,\ldots, \vv^{(i+1)}_p$, i.e., those that satisfy (\ref{12x2}) where $Q_j$ is represented by (\ref{nm92}) written in terms of  $\widetilde{\zz}^{(i+1)}_0,$ $\ldots,$ $\widetilde{\zz}^{(i+1)}_p$.
Then (\ref{nm92}) and (\ref{12x2}) imply, for $j=1,\ldots,p$,
\begin{eqnarray}\label{anmyy}
(G^{(i)}_j H^{(i)}_j)^\dag \x(\omega) = \vv_j(\omega) -\sum_{\ell=0}^{j-1} E_{v_j \widetilde{z}_\ell}E_{\widetilde{z}_\ell \widetilde{z}_\ell}^\dag  (G^{(i)}_\ell H^{(i)}_\ell)^\dag \x(\omega)\label{b419}.
\end{eqnarray}
 In the RHS of (\ref{anmyy}), for a notation simplicity, we write $\widetilde{z}_\ell$ instead of $\widetilde{z}^{(i+1)}_\ell$.

Thus, optimal injection  $\vv^{(i+1)}_j$, for $j=1,\ldots,p$, should satisfy  (\ref{anmyy}) where $\vv_j$  should be replaced with $\vv^{(i+1)}_j$. The latter is determined by Theorem \ref{553a} below where, for $i=0,1,\ldots,$ $j=1,\ldots,$ $p$ and  $\ell, k=0,\ldots,j$, we denote
\begin{eqnarray}\label{vmq8}
&&\hspace*{-13mm}\bgamma^{(i)}_\ell(\omega) = E_{\widetilde{z}^{(i+1)}_\ell \widetilde{z}^{(i+1)}_\ell}^\dag \widetilde{\zz}^{(i+1)}_\ell(\omega), \quad B^{(i)}_{j k} = (G^{(i)}_j H^{(i)}_j)^\dag    \sum_{s=0}^{h}F^{(0)}_{s}E_{\widetilde{z}^{(0)}_s \widetilde{z}^{(i+1)}_k}\\
&&\hspace*{17mm}\qa   A^{(i)}_{\ell k} = E_{\widetilde{z}^{(i+1)}_\ell \widetilde{z}^{(i+1)}_\ell}^\dag E_{\widetilde{z}^{(i+1)}_\ell \widetilde{z}^{(i+1)}_k}.
\end{eqnarray}
Here, $\bgamma^{(i)}_\ell(\omega)\in \rt^{q_\ell\times q_\ell}$, $B^{(i)}_{j k} \in\rt^{q_{j} \times q_{k}}$ and $ A^{(i)}_{\ell k}\in\rt^{q_\ell\times q_k}$.

\begin {theorem}\label{553a}
 Let $\widetilde{\zz}^{(i+1)}_0,\ldots,\widetilde{\zz}^{(i+1)}_p$  be given by (\ref{22m9}) and let
\begin{eqnarray}\label{rrs1}
 B^{(i)}=[B^{(i)}_{j 0},\ldots, B^{(i)}_{j,j-1}]  \qa
A^{(i)}= \{A^{(i)}_{k s}\}_{k,s=0}^{j-1},
\end{eqnarray}
where  $B^{(i)}\in\rt^{q_j\times q}$, $A^{(i)}\in\rt^{q\times q}$ and  $q=q_0 +\ldots +q_{j-1}$.
Then $\vv^{(i+1)}_j$, for $j=1,\ldots,p$,  is determined by
\begin{eqnarray}\label{3mc9s}
\vv^{(i+1)}_j(\omega)= (G^{(i)}_j H^{(i)}_j)^\dag \x(\omega) + \sum_{\ell=0}^{j-1} C^{(i)}_{j\ell} \bgamma^{(i)}_\ell(\omega),
\end{eqnarray}
where  matrices $C^{(i)}_{j 0},\ldots, C^{(i)}_{j,j-1}$ are defined by
\begin{eqnarray}\label{qq2s1}
[C^{(i)}_{j 0},\ldots, C^{(i)}_{j,j-1}] = B^{(i)}(I- A^{(i)})^{\dag}.
\end{eqnarray}
Here, $C^{(i)}_{j\ell} \in\rt^{q_j\times q_\ell}$, for $\ell = 0,\ldots, j-1$.
\end{theorem}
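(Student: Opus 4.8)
The plan is to convert the defining relation (\ref{12x2}) for the optimal injection into an explicit linear system for the cross-covariance blocks $C^{(i)}_{j\ell}=E_{v^{(i+1)}_j\widetilde z^{(i+1)}_\ell}$ and to solve that system with the Moore--Penrose pseudo-inverse.

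First I would unfold (\ref{12x2}). Since $Q_j$ is the transformation (\ref{nm92}) written in terms of $\widetilde{\zz}^{(i+1)}_0,\ldots,\widetilde{\zz}^{(i+1)}_{j-1}$, the optimal injection is by definition the vector $\vv_j$ for which $\vv_j-\sum_{\ell=0}^{j-1}E_{v_j\widetilde z_\ell}E_{\widetilde z_\ell\widetilde z_\ell}^\dag\widetilde{\zz}^{(i+1)}_\ell=\widetilde{\zz}^{(i+1)}_j$; substituting (\ref{22m9}) from Theorem~\ref{2m7a} and solving for $\vv_j$ produces (\ref{anmyy}), which, since $\bgamma^{(i)}_\ell(\omega)=E_{\widetilde z^{(i+1)}_\ell\widetilde z^{(i+1)}_\ell}^\dag\widetilde{\zz}^{(i+1)}_\ell(\omega)$, is exactly the form (\ref{3mc9s}) with coefficient blocks $C^{(i)}_{j\ell}=E_{v^{(i+1)}_j\widetilde z^{(i+1)}_\ell}$, $\ell=0,\ldots,j-1$. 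To pin these blocks down I would take cross-covariances: post-multiplying (\ref{3mc9s}) by $\widetilde{\zz}^{(i+1)}_k(\omega)^T$ and integrating over $\Omega$ gives, for each $k=0,\ldots,j-1$, the identity $C^{(i)}_{jk}=(G^{(i)}_jH^{(i)}_j)^\dag E_{x\widetilde z^{(i+1)}_k}+\sum_{\ell=0}^{j-1}C^{(i)}_{j\ell}\,E_{\widetilde z^{(i+1)}_\ell\widetilde z^{(i+1)}_\ell}^\dag E_{\widetilde z^{(i+1)}_\ell\widetilde z^{(i+1)}_k}$. Writing $\x=\F(\y)$ through the initial pairwise-uncorrelated vectors $\widetilde{\zz}^{(0)}_s$ and the matrices $F^{(0)}_s$ turns the free term into $B^{(i)}_{jk}$, and the sum is built from the blocks $A^{(i)}_{\ell k}$ of (\ref{vmq8}); collecting the $j$ matrix identities into one block-row equation with $A^{(i)},B^{(i)}$ as in (\ref{rrs1}) yields $[C^{(i)}_{j0},\ldots,C^{(i)}_{j,j-1}]\,(I-A^{(i)})=B^{(i)}$, whose minimal-Frobenius-norm solution is $[C^{(i)}_{j0},\ldots,C^{(i)}_{j,j-1}]=B^{(i)}(I-A^{(i)})^\dag$. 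This is (\ref{qq2s1}), and inserting it into (\ref{3mc9s}) gives the claimed formula.

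The delicate point, and the one I expect to be the main obstacle, is justifying that $B^{(i)}(I-A^{(i)})^\dag$ is a genuine solution of $[\,\cdots\,](I-A^{(i)})=B^{(i)}$ rather than just its least-squares surrogate, i.e. that the system is consistent --- equivalently $B^{(i)}(I-A^{(i)})^\dag(I-A^{(i)})=B^{(i)}$, equivalently the rows of $B^{(i)}$ lie in the row space of $I-A^{(i)}$. I would handle this by noting that an injection satisfying (\ref{12x2}) exists by construction, so its own cross-covariances $\{E_{v^{(i+1)}_j\widetilde z^{(i+1)}_\ell}\}$ already solve the block-row system; the system is therefore consistent and any pseudo-inverse representation of its solution, in particular $B^{(i)}(I-A^{(i)})^\dag$, is legitimate. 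Alternatively the required range condition can be read off directly from $\widetilde{\zz}^{(i+1)}_k(\omega)=(G^{(i)}_kH^{(i)}_k)^\dag\x(\omega)$ together with the identity $E_{uw}E_{ww}^\dag E_{ww}=E_{uw}$ (Lemma~1 of \cite{2001390}, already used in the proof of Theorem~\ref{902n}). Everything else is routine covariance algebra.
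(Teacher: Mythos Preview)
Your proposal is correct and follows essentially the same route as the paper: both rewrite (\ref{anmyy}) in the form (\ref{3mc9s}) with unknown coefficient blocks $C^{(i)}_{j\ell}=E_{v^{(i+1)}_j\widetilde z^{(i+1)}_\ell}$, take cross-covariances with $\widetilde{\zz}^{(i+1)}_k$ to obtain the block linear system $C^{(i)}(I-A^{(i)})=B^{(i)}$, and then pass to the pseudo-inverse. The only cosmetic differences are that the paper names (\ref{anmyy}) a Fredholm integral equation of the second kind before carrying out the same covariance calculation, and that the paper does \emph{not} argue consistency in the singular case---it simply replaces the equation by the least-squares problem $\min_{C^{(i)}}\|B^{(i)}-C^{(i)}(I-A^{(i)})\|^2$ and takes (\ref{qq2s1}) as its minimum-norm solution---so your consistency discussion, while reasonable, goes beyond what the paper actually establishes.
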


\begin{proof}
 We write (\ref{anmyy}) as
\begin{eqnarray*}
\vv_j(\omega)=(G^{(i)}_j H^{(i)}_j)^\dag \x(\omega) + \sum_{\ell=0}^{j-1} E_{v_j \widetilde{z}_\ell}E_{\widetilde{z}_\ell \widetilde{z}_\ell}^\dag  (G^{(i)}_\ell H^{(i)}_\ell)^\dag \x(\omega),
\end{eqnarray*}
i.e.,
\begin{eqnarray}\label{bq119}
\vv_j(\omega)= (G^{(i)}_j H^{(i)}_j)^\dag \x(\omega) + \sum_{\ell=0}^{j-1} \left[\int_{\Omega} \vv_j(\xi)[{\widetilde{\zz}^{(i+1)}_\ell}(\xi)]^T d\mu(\xi)\right] \bgamma^{(i)}_\ell(\omega).
\end{eqnarray}
Let us now replace $\vv_j(\omega)$  with $\vv^{(i+1)}_j(\omega)$ and write equation (\ref{bq119}) as follows:
\begin{eqnarray}\label{x5e19}
\vv^{(i+1)}_j(\omega)= \balpha^{(i)}_j(\omega) +  \int_{\Omega} K^{(i)}_j (\omega,\xi) \vv^{(i+1)}_j(\xi)d\mu(\xi),
\end{eqnarray}
where
\begin{eqnarray}\label{79e19}
\balpha^{(i)}_j(\omega) = (G^{(i)}_j H^{(i)}_j)^\dag \x(\omega) \qa \displaystyle K^{(i)}_j (\omega,\xi)=\sum_{\ell=0}^{j-1}[{\widetilde{\zz}^{(i+1)}_\ell}(\xi)]^T \bgamma^{(i)}_\ell(\omega).
\end{eqnarray}
Recall that in (\ref{79e19}), matrices $G^{(i)}_j$ and  $H^{(i)}_j$  depend on  $\vv^{(i)}_j(\omega)$, not on $\vv^{(i+1)}_j(\omega)$. Therefore, equation (\ref{x5e19}) is a vector version of the Fredholm integral equation of the second kind   \cite{kanwal1996} with respect to ${\vv}^{(i+1)}_j(\omega)$. Its solution is provided as follows. Write (\ref{x5e19}) as
\begin{eqnarray}\label{2mc9s}
\vv^{(i+1)}_j(\omega)= \balpha^{(i)}_{j}(\omega) + \sum_{\ell=0}^{j-1} C^{(i)}_{j\ell} \bgamma^{(i)}_\ell(\omega),
\end{eqnarray}
where
$$
\displaystyle C^{(i)}_{j\ell} = \int_{\Omega} \vv^{(i+1)}_j(\xi)[{\widetilde{\zz}^{(i+1)}_\ell}(\xi)]^T d\mu(\xi).% = E_{v^{(i)}_\ell z^{(i)}_\ell},
$$

Let us now multiply both sides of (\ref{2mc9s}) by $[\zz^{(i+1)}_k(\omega)]^T$, for $k=0,\ldots,j-1$,
 and integrate. It implies
\begin{eqnarray*}
& &\hspace*{-7mm} \int_{\Omega} \vv^{(i+1)}_j(\omega)[{\widetilde{\zz}^{(i+1)}_k}(\omega)]^Td\mu(\omega) \nonumber\\
& & = \int_{\Omega} \balpha^{(i)}_{j}(\omega)[{\widetilde{\zz}^{(i+1)}_k}(\omega)]^Td\mu(\omega) +  \sum_{\ell=0}^{j-1} C^{(i)}_{j\ell} \int_{\Omega}\bgamma^{(i)}_\ell(\omega)[{\widetilde{\zz}^{(i+1)}_k}(\omega)]^Td\mu(\omega)
\end{eqnarray*}
and
\begin{eqnarray}\label{2n44s}
C^{(i)}_{j k} = B_{j k}  + \sum_{\ell=0}^{j-1} C^{(i)}_{j\ell}A^{(i)}_{\ell k},
\end{eqnarray}
where, for $k=0,\ldots,j-1$,
\begin{eqnarray}\label{n444s}
 B^{(i)}_{j k}  &=& \int_{\Omega}  \balpha^{(i)}_{j}(\omega)[{\widetilde{\zz}^{(i+1)}_k}(\omega)]^T d\mu(\omega)\nonumber\\
&=& (G^{(i)}_j H^{(i)}_j)^\dag \int_{\Omega}  \left[\sum_{s=0}^{h}F^{(0)}_{s}{\zz}^{(0)}_{s}(\omega)\right][{\widetilde{\zz}^{(i+1)}_k}(\omega)]^T d\mu(\omega)\nonumber\\
&=&  (G^{(i)}_j H^{(i)}_j)^\dag    \sum_{s=0}^{h}F^{(0)}_{s}E_{{z}^{(0)}_s \widetilde{z}^{(i+1)}_k}  \in\rt^{q_{j} \times q_{k}}
\end{eqnarray}
 and
 \begin{eqnarray}\label{naa3s}
&&\hspace*{-10mm} A^{(i)}_{\ell k}=\int_{\Omega}\bgamma^{(i)}_\ell(\omega)[{\widetilde{\zz}^{(i+1)}_k}(\omega)]^Td\mu(\omega)\nonumber\\
 &&\hspace*{30mm} = E_{\widetilde{z}^{(i+1)}_\ell \widetilde{z}^{(i+1)}_\ell}^\dag E_{\widetilde{z}^{(i+1)}_\ell \widetilde{z}^{(i+1)}_k}\in\rt^{q_\ell\times q_k}.
\end{eqnarray}
Let  $C^{(i)}=[C^{(i)}_{j 0},\ldots, C^{(i)}_{j,j-1}]\in\rt^{q_j\times q}$. Then the set of matrix equations in (\ref{2n44s}) can be written as a single equation
\begin{eqnarray}\label{2cm4s}
C^{(i)}=B^{(i)} + C^{(i)} A^{(i)}
\end{eqnarray}
or
\begin{eqnarray}\label{2j44s}
B^{(i)} = C^{(i)}(I- A^{(i)}).
\end{eqnarray}
If matrix $I- A^{(i)}$ is invertible then (\ref{2j44s}) implies
\begin{eqnarray}\label{xn2s}
C^{(i)} = B^{(i)}(I- A^{(i)})^{-1}.
\end{eqnarray}
If matrix $I- A^{(i)}$ is singular then instead of equation (\ref{2cm4s}) or (\ref{2j44s}) we consider the  problem
\begin{eqnarray}\label{zb34s}
\min_{C^{(i)}}\|B^{(i)} - C^{(i)}(I- A^{(i)})\|^2.
\end{eqnarray}
Its minimal Frobenius norm solution is given by \cite{ben1974}
\begin{eqnarray}\label{xn2s1}
C^{(i)} = B^{(i)}(I- A^{(i)})^{\dag}.
\end{eqnarray}
Thus, injection $\vv^{(i+1)}_j$ we search follows from (\ref{2mc9s}),  (\ref{xn2s}) and (\ref{xn2s1}). $\hfill\blacksquare$
\end{proof}

%%%%%%%%%%%%%%%%%%%%%%%%%%%%%%%%%%%%%%%%%%%%%%%%%%%%%%%%%%%%%%%
%%%%%%%%%%%%%%%%%%%%%%%%%%%%%%%%%%%%%%%%%%%%%%%%%%%%%%%%%%%%%%%%

\subsection{Determination of matrices  $G^{(i+1)}_1, H^{(i+1)}_1, \ldots,$ $G^{(i+1)}_p, H^{(i+1)}_p$ in Step 5}\label{nmwk1}

In Step 5, matrices $G^{(i+1)}_1, H^{(i+1)}_1, \ldots,$ $G^{(i+1)}_p, H^{(i+1)}_p$ solve problem (\ref{vvh7j}). At the same time,
the problems in  (\ref{44k1}) and (\ref{vvh7j})  differ in the notation only. Therefore, in Step 5, matrices $G^{(i+1)}_1, H^{(i+1)}_1, \ldots,$ $G^{(i+1)}_p, H^{(i+1)}_p$ are determined, in  fact, by Theorem \ref{9772n} where only the notation should be changed. Nevertheless, to avoid any confusion, we below provide Theorem \ref{xn12n} where matrices $G^{(i+1)}_1, H^{(i+1)}_1, \ldots,$ $G^{(i+1)}_p, H^{(i+1)}_p$ are represented.
To simplify the notation in Theorem \ref{xn12n} , we set
 \begin{eqnarray}\label{xnm33}
{\it\Gamma}_{{z}_j} = {\it\Gamma}_{{\widetilde z}^{(i)}_j},  \quad E_{{x}{{z}_j}} = E_{{x}{{\widetilde z}^{(i)}_j}} \qa E_{{{z}_j}{{ z}_j}} = E_{{{\widetilde z}^{(i)}_j}{{\widetilde z}^{(i)}_j}}.
 \end{eqnarray}

\begin{theorem}\label{xn12n}
 Let  $\vv_0,\vv^{(i)}_1,\ldots,\vv^{(i)}_p$ be  well-defined injections and vectors ${\widetilde\zz}^{(i)}_1,$ $\ldots,{\widetilde\zz}^{(i)}_p$ be pairwise uncorrelated.
Then in the notation (\ref{xnm33}), the minimal Frobenius norm solution to the problem in (\ref{vvh7j})  is given, for $j=1,\ldots,p$, by
\begin{eqnarray}\label{71n32}
G^{(i+1)}_j = U_{{\it\Gamma}_{{z}_j},r_j} \qa H^{(i+1)}_j=U_{{\it\Gamma}_{{ z}_j},r_j}^T  E_{{x}{{z}^{(i)}_j}}E_{{{ z}_j}{{z}_j}}^\dag.
\end{eqnarray}

\end{theorem}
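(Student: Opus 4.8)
The plan is to observe that Theorem \ref{xn12n} is, up to a relabeling of vectors and matrices, exactly Theorem \ref{9772n} applied to the pairwise uncorrelated vectors ${\widetilde\zz}^{(i)}_1,\ldots,{\widetilde\zz}^{(i)}_p$ in place of $\zz_0,\ldots,\zz_p$, with the fixed pair $G^{(0)}_0,H^{(0)}_0$ playing no role in the minimization (the $k=0$ term is not optimized in problem (\ref{vvh7j})). So first I would restate problem (\ref{vvh7j}) and note that, because ${\widetilde\zz}^{(i)}_1,\ldots,{\widetilde\zz}^{(i)}_p$ are pairwise uncorrelated by hypothesis and $\vv_0,\vv^{(i)}_1,\ldots,\vv^{(i)}_p$ are well-defined injections, all the assumptions of Theorem \ref{9772n} are in force for the index range $j=1,\ldots,p$.

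Next I would reproduce the decoupling computation behind (\ref{vbn34}): writing $S_j=G_jH_j$ and using $E_{\widetilde z^{(i)}_k \widetilde z^{(i)}_j}=\oo$ for $k\neq j$, the block-diagonality of the covariance of $[{\widetilde\zz}^{(i)T}_1,\ldots,{\widetilde\zz}^{(i)T}_p]^T$ makes the cost function in (\ref{vvh7j}) separate into a sum $\sum_{j=1}^p\|\F(\y)-S_j{\widetilde\zz}^{(i)}_j\|^2_\Omega$ minus a constant multiple of $\mbox{tr}\{E_{xx}\}$. Hence the problem splits into $p$ independent rank-constrained problems of the form (\ref{sad334}), one for each $j$. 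Each of these is solved exactly as in the proof of Theorem \ref{9772n}: complete the square as in (\ref{erc34}) so that only $\|S_jE_{z_jz_j}^{1/2}-E_{xz_j}(E_{z_jz_j}^{1/2})^\dag\|^2$ depends on $S_j$, invoke the generalized Eckart--Young / low-rank approximation result to get $S_j=[E_{xz_j}(E_{z_jz_j}^{1/2})^\dag]_{r_j}(E_{z_jz_j}^{1/2})^\dag = U_{{\it\Gamma}_{z_j},r_j}U_{{\it\Gamma}_{z_j},r_j}^T E_{xz_j}E_{z_jz_j}^\dag$, and then read off the factorization $G^{(i+1)}_j=U_{{\it\Gamma}_{z_j},r_j}$, $H^{(i+1)}_j=U_{{\it\Gamma}_{z_j},r_j}^T E_{xz_j}E_{z_jz_j}^\dag$. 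Finally I would substitute the abbreviations (\ref{xnm33}), i.e. ${\it\Gamma}_{z_j}={\it\Gamma}_{{\widetilde z}^{(i)}_j}$, $E_{xz_j}=E_{x{\widetilde z}^{(i)}_j}$, $E_{z_jz_j}=E_{{\widetilde z}^{(i)}_j{\widetilde z}^{(i)}_j}$, to arrive at (\ref{71n32}).

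There is essentially no genuine obstacle here: the only point requiring a word of care is that the $k=0$ summand $G^{(0)}_0H^{(0)}_0{\widetilde\zz}^{(i)}_0$ is a fixed vector that does not participate in the minimization in (\ref{vvh7j}), so one should verify that the decoupling in (\ref{vbn34}) still goes through when that term is held fixed — which it does, since $\widetilde\zz^{(i)}_0$ is uncorrelated with each $\widetilde\zz^{(i)}_j$, $j\geq1$, and moving a fixed shift into the cost function does not change the argmin over $S_1,\ldots,S_p$. The substance of the argument is entirely inherited from Theorem \ref{9772n}, so the write-up can legitimately be short: state the hypotheses, point to the separation (\ref{vbn34}), apply Theorem \ref{9772n} to each block, and rename via (\ref{xnm33}).
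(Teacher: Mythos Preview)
Your proposal is correct and takes essentially the same approach as the paper, which simply writes ``The proof follows from the proof of Theorem \ref{9772n}.'' One minor point: in problem (\ref{vvh7j}) the sum already runs from $k=1$ to $p$, so there is no $k=0$ summand to worry about at all---your extra paragraph about holding $G^{(0)}_0H^{(0)}_0\widetilde{\zz}^{(i)}_0$ fixed (and about $\widetilde{\zz}^{(i)}_0$ being uncorrelated with the rest, which is not even among the hypotheses) is unnecessary and can be dropped.
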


\begin{proof}
The proof follows from the proof of Theorem \ref{9772n}. $\hfill\blacksquare$
\end{proof}

%%%%%%%%%%%%%%%%%%%%%%%%%%%%%%%%%%%%%%%%%%%%%%%%%%%%%%%%%%%%%%%
%%%%%%%%%%%%%%%%%%%%%%%%%%%%%%%%%%%%%%%%%%%%%%%%%%%%%%%%%%%%%%%%
\subsection{Error analysis of the solution of problem in (\ref{gxd11}),  (\ref{xn39}), (\ref{gg09})}\label{fk92}

\begin{theorem}\label{opwm9}
Let $G_0^{(0)},$ $H_0^{(0)},$ and  $G_k^{(i)},$ $H_k^{(i)}$ and $\widetilde{\zz}^{(i)}_k$, for $k=1,\ldots, p$, $i=0,1,\ldots,$ be determined by Theorem \ref{9772n}  and \ref{2m7a}, respectively. Let $\varepsilon^{(i)}$ be the associated error defined by (\ref{t87j}) and (\ref{bb7j}). Then the increase in the number of iterations $i$ implies the decrease in the associated error, i.e.,
 \begin{eqnarray}\label{ww4v}
 \varepsilon^{(i+1)}\leq \varepsilon^{(i)}.
 \end{eqnarray}
\end{theorem}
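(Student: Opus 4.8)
The plan is to use only the structure of Steps 3, 5 and 6 of the iterative loop: each of Steps 3 and 5 solves a least--squares problem \emph{exactly} over a feasible set that already contains the corresponding iterate of the previous loop, while Step 6 retains the smaller of the two resulting errors. (Steps 2 and 4 do not enter the error recursion: Step 4 only recovers the injection $\vv^{(i+1)}_j$ from $\widetilde{\zz}^{(i+1)}_j$ via (\ref{12x2}), and the vectors produced in Step 2 are superseded in Step 3.) The naive ``each loop is a descent step'' argument is unavailable here, because Step 3 of the $i$-th loop uses the matrices $G^{(i)}_k,H^{(i)}_k$ whereas Step 5 uses the vectors $\widetilde{\zz}^{(i)}_k$, so the two minimisations are performed in parallel (Jacobi-style) rather than sequentially. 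The remedy is to combine the minimum taken in Step 6 with two \emph{crossed} inequalities, $\varepsilon^{(i+1)}_{GH}\le\varepsilon^{(i)}_{z}$ and $\varepsilon^{(i+1)}_{z}\le\varepsilon^{(i)}_{GH}$: granting them, (\ref{t87j})--(\ref{bb7j}) give $\varepsilon^{(i+1)}=\min\{\varepsilon^{(i+1)}_{GH},\varepsilon^{(i+1)}_{z}\}\le\min\{\varepsilon^{(i)}_{z},\varepsilon^{(i)}_{GH}\}=\varepsilon^{(i)}$, which is (\ref{ww4v}). Throughout, I treat the block $k=0$, namely $G^{(i)}_0 H^{(i)}_0\zz_0=G^{(0)}_0 H^{(0)}_0\y$ with $\widetilde{\zz}^{(i)}_0=\y$, as present and frozen in every minimisation of Steps 3 and 5 -- the convention implicit in (\ref{zt7j}), (\ref{vt7j}) and in the proof of Theorem \ref{2m7a}.

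For $i\ge 1$, the inequality $\varepsilon^{(i+1)}_{GH}\le\varepsilon^{(i)}_{z}$ holds because, by Step 5 and Theorem \ref{xn12n}, the matrices $G^{(i+1)}_1,H^{(i+1)}_1,\ldots,G^{(i+1)}_p,H^{(i+1)}_p$ minimise $(G_k,H_k)_{k=1}^{p}\mapsto\|\F(\y)-\sum_{k=0}^{p}G_kH_k\widetilde{\zz}^{(i)}_k\|^2_\Omega$ over all $G_k\in\rt^{m\times r_k}$, $H_k\in\rt^{r_k\times q_k}$, and $\varepsilon^{(i+1)}_{GH}$ in (\ref{vt7j}) is the attained minimum. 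The matrices $G^{(i-1)}_k,H^{(i-1)}_k$ delivered by Step 5 of the $(i-1)$-th loop (for $i=1$, the preliminary matrices of Theorem \ref{9772n}) have exactly those sizes, hence are an admissible competitor; the functional evaluated at them is $\|\F(\y)-\sum_{k=0}^{p}G^{(i-1)}_kH^{(i-1)}_k\widetilde{\zz}^{(i)}_k\|^2_\Omega$, which is $\varepsilon^{(i)}_{z}$ by (\ref{zt7j}) applied to the $(i-1)$-th loop.

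The inequality $\varepsilon^{(i+1)}_{z}\le\varepsilon^{(i)}_{GH}$ is the mirror image: by Step 3 and Theorem \ref{2m7a}, the vectors $\widetilde{\zz}^{(i+1)}_1,\ldots,\widetilde{\zz}^{(i+1)}_p$ minimise $(\zz_k)_{k=1}^{p}\mapsto\|\F(\y)-\sum_{k=0}^{p}G^{(i)}_kH^{(i)}_k\zz_k\|^2_\Omega$ over $\zz_k\in L^2(\Omega,\rt^{q_k})$, with minimum $\varepsilon^{(i+1)}_{z}$ by (\ref{zt7j}); the vectors $\widetilde{\zz}^{(i-1)}_k$ from Step 3 of the $(i-1)$-th loop (for $i=1$, $\zz^{(0)}_k=\widetilde{\zz}^{(0)}_k$) lie in the same spaces, and the functional at this competitor equals $\|\F(\y)-\sum_{k=0}^{p}G^{(i)}_kH^{(i)}_k\widetilde{\zz}^{(i-1)}_k\|^2_\Omega=\varepsilon^{(i)}_{GH}$ by (\ref{vt7j}) for the $(i-1)$-th loop. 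This gives (\ref{ww4v}) for all $i\ge 1$. The base case $i=0$ is the same Step 3 argument: by Theorem \ref{nm209}, $\varepsilon^{(0)}=\|\F(\y)-\sum_{k=0}^{p}G^{(0)}_kH^{(0)}_k\zz^{(0)}_k\|^2_\Omega$ is the value of the Step 3 functional of the $0$-th loop at the competitor $\zz_k=\zz^{(0)}_k=\widetilde{\zz}^{(0)}_k$, so $\varepsilon^{(1)}_{z}\le\varepsilon^{(0)}$ and hence $\varepsilon^{(1)}=\min\{\varepsilon^{(1)}_{GH},\varepsilon^{(1)}_{z}\}\le\varepsilon^{(0)}$.

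I expect the main obstacle to be bookkeeping rather than mathematics: one must make sure that the frozen $k=0$ block is included in exactly the same way in the minimisations (\ref{gxa1j}), (\ref{vvh7j}) and in the error functionals (\ref{zt7j}), (\ref{vt7j}), so that ``the value of the Step 5 (resp. Step 3) functional at the previous iterate'' is \emph{literally} $\varepsilon^{(i)}_{z}$ (resp. $\varepsilon^{(i)}_{GH}$), and that the previous iterate is genuinely admissible (right matrix sizes for Step 5; membership in $L^2(\Omega,\rt^{q_k})$ for Step 3). Once this is arranged, every inequality above is just ``optimal value $\le$ value at an admissible point,'' and no computation is required.
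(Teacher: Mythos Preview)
Your argument is correct and is genuinely different from the paper's. The paper proceeds by an explicit induction on the parity of $i$: it first checks $i=0,1,2$ by hand, then assumes that $\widetilde{\zz}^{(2s)}_k=\widetilde{\zz}^{(2s-1)}_k$ and that the minimum in Step~6 alternates, $\varepsilon^{(2s-1)}=\varepsilon^{(2s-1)}_{z}$, $\varepsilon^{(2s)}=\varepsilon^{(2s)}_{GH}$, and propagates these identities using the fact that $G^{(1)}_k=G^{(0)}_k$ forces $\widetilde{\zz}^{(2)}_k=\widetilde{\zz}^{(1)}_k$, hence $G^{(3)}_k=G^{(2)}_k$, and so on. In effect, the paper proves monotonicity by showing that each Jacobi loop degenerates into a Gauss--Seidel step.

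Your crossed inequalities $\varepsilon^{(i+1)}_{GH}\le\varepsilon^{(i)}_{z}$ and $\varepsilon^{(i+1)}_{z}\le\varepsilon^{(i)}_{GH}$ bypass all of this: each is just ``minimum $\le$ value at the competitor from the previous loop,'' and the $\min$ in Step~6 finishes the job in one line. This is both shorter and more robust, since it does not rely on the coincidence $G^{(1)}_k=G^{(0)}_k$ or on tracking which of $\varepsilon^{(i)}_{z},\varepsilon^{(i)}_{GH}$ actually attains the minimum. Your explicit remark about freezing the $k=0$ block is exactly the bookkeeping needed to reconcile the sums from $k=1$ in (\ref{gxa1j}),~(\ref{vvh7j}) with the sums from $k=0$ in (\ref{zt7j}),~(\ref{vt7j}); the paper's own proof silently adopts the same convention.
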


\begin{proof} Let us  consider the initial  case of the proposed method when $i=0$.

{\sf\em (i) The case ${\mathbf {\mathit i}=0}$.} For $i=0$, the $i$-th iteration loop represented  in Section \ref{nm33}, implies
$$
\varepsilon^{(1)}_z = \min_{{\mathbf z}_1,\ldots, {\mathbf z}_p} \|\x - \sum_{k=1}^{p} G^{(0)}_k H^{(0)}_k \zz_k\|^2_\Omega.
$$
Here, for $j=1,\ldots,p,$  $G^{(1)}_j = G^{(0)}_j$ and $H^{(1)}_j = H^{(0)}_j$. Therefore,
$$
\varepsilon^{(1)}_z = \min_{{\mathbf z}_1,\ldots, {\mathbf z}_p} \|\x - \sum_{k=1}^{p} G^{(1)}_k H^{(1)}_k \zz_k\|^2_\Omega,
$$
i.e., for any ${\mathbf z}_1,\ldots, {\mathbf z}_p$,
$$
\varepsilon^{(1)}_z \leq \|\x - \sum_{k=1}^{p} G^{(1)}_k H^{(1)}_k \zz_k\|^2_\Omega.
$$
In particular, for ${\mathbf z}_1=\widetilde{\mathbf z}^{(0)}_1,\ldots, {\mathbf z}_p=\widetilde{\mathbf z}^{(0)}_p$,
$$
\varepsilon^{(1)}_z \leq \|\x - \sum_{k=1}^{p} G^{(1)}_k H^{(1)}_k \widetilde{\zz}^{(0)}_k\|^2_\Omega = \varepsilon^{(1)}_{GH}
$$
and
$$
\varepsilon^{(1)}_z \leq \|\x - \sum_{k=1}^{p} G^{(0)}_k H^{(0)}_k \widetilde{\zz}^{(0)}_k\|^2_\Omega = \varepsilon^{(0)}.
$$
Let us denote
\begin{eqnarray}\label{uy9v}
\varepsilon^{(1)} = \varepsilon^{(1)}_z.
\end{eqnarray}
Then
\begin{eqnarray}\label{kk5m}
\varepsilon^{(1)} \leq \varepsilon^{(0)}.
\end{eqnarray}

For $i=1,2,\ldots,$ let us prove inequality (\ref{ww4v}) by induction. To this end, we first consider the basis step of the induction which consists of cases $i=1$ and $i=2$.

{\sf\em (ii) The  basis step: Case ${\mathbf {\mathit i}=1}$.} If $i=1$ then the $i$-th iteration loop (see Section \ref{nm33}) implies
$$
 \varepsilon^{(2)}_{GH}=\min_{\substack{G_1, H_1,\ldots, G_p, H_p\\}}  \left\|\x -  \sum_{k=1}^{p}G_k H_k \widetilde{\zz}^{(1)}_k\right\|^2_\Omega = \left\|\x -  \sum_{k=1}^{p}G^{(2)}_k H^{(2)}_k \widetilde{\zz}^{(1)}_k\right\|^2_\Omega,
$$
i.e., for all $G_k$ and $H_k$ with $k=1,\ldots,p$,
$$
\varepsilon^{(2)}_{GH} \leq \left\|\x -  \sum_{k=1}^{p}G_k H_k \widetilde{\zz}^{(1)}_k\right\|^2_\Omega.
$$
In particular, for $G_k = G^{(1)}_k$ and $H_k = H^{(1)}_k$ with $k=1,\ldots,p$,
\begin{eqnarray}\label{mm1v}
\varepsilon^{(2)}_{GH} \leq \left\|\x -  \sum_{k=1}^{p}G^{(1)}_k H^{(1)}_k \widetilde{\zz}^{(1)}_k\right\|^2_\Omega.
\end{eqnarray}
Further, because  for $k=1,\ldots,p,$  $G^{(1)}_k = G^{(0)}_k$ and $H^{(1)}_k = H^{(0)}_k$ then
\begin{eqnarray}\label{h64v}
\min_{{\mathbf z}_1,\ldots, {\mathbf z}_p} \|\x - \sum_{k=1}^{p} G^{(1)}_k H^{(1)}_k \zz_k\|^2_\Omega = \min_{{\mathbf z}_1,\ldots, {\mathbf z}_p} \|\x - \sum_{k=1}^{p} G^{(0)}_k H^{(0)}_k \zz_k\|^2_\Omega.
\end{eqnarray}
Therefore, $\zz^{(2)}_k = \zz^{(1)}_k$, for $k=1,\ldots,p$ (see (\ref{gxa1j})). Thus, (\ref{mm1v}) and (\ref{h64v}) imply
\begin{eqnarray}\label{231v}
\varepsilon^{(2)}_{GH} \leq \left\|\x -  \sum_{k=1}^{p}G^{(1)}_k H^{(1)}_k \widetilde{\zz}^{(2)}_k\right\|^2_\Omega = \varepsilon^{(2)}_{z}.
\end{eqnarray}
But since $G^{(1)}_k = G^{(0)}_k$ and $H^{(1)}_k = H^{(0)}_k$, for $k=1,\ldots,p$, then
\begin{eqnarray}\label{88nv}
\varepsilon^{(2)}_{z} = \varepsilon^{(1)}_{z}.
\end{eqnarray}
Then by (\ref{uy9v}), (\ref{231v}) and (\ref{88nv}),
\begin{eqnarray}\label{8nnm}
\varepsilon^{(2)}_{GH} \leq  \varepsilon^{(1)}_{z} =  \varepsilon^{(1)}.
\end{eqnarray}
Because of (\ref{231v}) we denote
\begin{eqnarray}\label{897m}
\varepsilon^{(2)} = \varepsilon^{(2)}_{GH}.
\end{eqnarray}
Then (\ref{8nnm}) and (\ref{897m}) imply
\begin{eqnarray}\label{8j7m}
\varepsilon^{(2)} \leq \varepsilon^{(1)}.
\end{eqnarray}

{\sf\em (iii) The  basis step: Case ${\mathbf {\mathit i}=2}$.} In this case, $\varepsilon^{(i+1)}_z$ in (\ref{zt7j}) is written as
$$
\varepsilon^{(3)}_z = \min_{{\mathbf z}_1,\ldots, {\mathbf z}_p} \|\x - \sum_{k=1}^{p} G^{(2)}_k H^{(2)}_k \zz_k\|^2_\Omega = \|\x - \sum_{k=1}^{p} G^{(2)}_k H^{(2)}_k \widetilde{\zz}^{(3)}_k\|^2_\Omega.
$$
That is, for any $\zz_1,\ldots, \zz_p,$
$$
\varepsilon^{(3)}_z \leq \|\x - \sum_{k=1}^{p} G^{(2)}_k H^{(2)}_k {\zz}_k\|^2_\Omega.
$$
In particular, for $\zz_k = \widetilde{\zz}^{(2)}_k$ and $k=1,\ldots,p,$
$$
\varepsilon^{(3)}_z \leq \|\x - \sum_{k=1}^{p} G^{(2)}_k H^{(2)}_k \widetilde{\zz}^{(2)}_k\|^2_\Omega,
$$
where by (\ref{h64v}), $\widetilde{\zz}^{(2)}_k = \widetilde{\zz}^{(1)}_k$. Thus,
\begin{eqnarray}\label{f5m}
\varepsilon^{(3)}_z \leq \|\x - \sum_{k=1}^{p} G^{(2)}_k H^{(2)}_k \widetilde{\zz}^{(1)}_k\|^2_\Omega.
\end{eqnarray}
At the same time,
\begin{eqnarray}\label{bb7m}
 \varepsilon^{(3)}_{GH}=\min_{\substack{G_1, H_1,\ldots, G_p, H_p\\}}  \left\|\x -  \sum_{k=1}^{p}G_k H_k \widetilde{\zz}^{(2)}_k\right\|^2_\Omega = \left\|\x -  \sum_{k=1}^{p}G^{(3)}_k H^{(3)}_k \widetilde{\zz}^{(2)}_k\right\|^2_\Omega.
\end{eqnarray}
But $\widetilde{\zz}^{(2)}_k = \widetilde{\zz}^{(1)}_k$, therefore, $G^{(3)}_k, H^{(3)}_k$ solve, in fact,
$$
\min_{\substack{G_1, H_1,\ldots, G_p, H_p\\}}  \left\|\x -  \sum_{k=1}^{p}G_k H_k \widetilde{\zz}^{(1)}_k\right\|^2_\Omega,
$$
i.e., $G^{(3)}_k = G^{(2)}_k , H^{(3)}_k = H^{(2)}_k$. Therefore, (\ref{bb7m}) implies
\begin{eqnarray}\label{br37m}
 \varepsilon^{(3)}_{GH} = \left\|\x -  \sum_{k=1}^{p}G^{(2)}_k H^{(2)}_k \widetilde{\zz}^{(1)}_k\right\|^2_\Omega = \varepsilon^{(2)}_{GH}
\end{eqnarray}
and (\ref{f5m}), (\ref{br37m}) imply
\begin{eqnarray}\label{b23}
 \varepsilon^{(3)}_z\leq  \varepsilon^{(3)}_{GH} = \varepsilon^{(2)}_{GH}.
\end{eqnarray}
Denote $\varepsilon^{(3)} = \varepsilon^{(3)}_z$. By (\ref{897m}), $\varepsilon^{(2)} = \varepsilon^{(2)}_{GH}.$ Then
\begin{eqnarray}\label{op27}
 \varepsilon^{(3)} \leq \varepsilon^{(2)}.
\end{eqnarray}

{\sf\em The inductive step.} Let us suppose that, for $s=1,2,\ldots,$ if $\widetilde{\zz}^{(2s)}$ $= \widetilde{\zz}^{(2s-1)}$, and $ \varepsilon^{(2s)} = \varepsilon^{(2s)}_{GH}$ and $ \varepsilon^{(2s-1)} = \varepsilon^{(2s-1)}_{z}$ then
\begin{eqnarray}\label{yy5m}
 \varepsilon^{(2s)} \leq \varepsilon^{(2s-1)}.
\end{eqnarray}
Below, we show that then (\ref{ww4v}) is true.

To this end, for $s=1,2,\ldots,$ let us consider case $i=2s$. We have
$$
\varepsilon^{(2s+1)}_z = \min_{{\mathbf z}_1,\ldots, {\mathbf z}_p} \|\x - \sum_{k=1}^{p} G^{(2s)}_k H^{(2s)}_k \zz_k\|^2_\Omega = \|\x - \sum_{k=1}^{p} G^{(2s)}_k H^{(2s)}_k \widetilde{\zz}^{(2s+1)}_k\|^2_\Omega.
$$
That is, for any $\zz_1,\ldots, \zz_p,$
$$
\varepsilon^{(2s+1)}_z \leq \|\x - \sum_{k=1}^{p} G^{(2s)}_k H^{(2s)}_k {\zz}_k\|^2_\Omega.
$$
In particular, for $\zz_k = \widetilde{\zz}^{(2s)}_k$ and $k=1,\ldots,p,$
$$
\varepsilon^{(2s+1)}_z \leq \|\x - \sum_{k=1}^{p} G^{(2s)}_k H^{(2s)}_k \widetilde{\zz}^{(2s)}_k\|^2_\Omega,
$$
where by the above assumption, $\widetilde{\zz}^{(2s)}_k = \widetilde{\zz}^{(2s-1)}_k$. Thus,
\begin{eqnarray}\label{re5m}
\varepsilon^{(2s+1)}_z \leq \|\x - \sum_{k=1}^{p} G^{(2s)}_k H^{(2s)}_k \widetilde{\zz}^{(2s-1)}_k\|^2_\Omega = \varepsilon^{(2s)}_{GH},
\end{eqnarray}
i.e.,
\begin{eqnarray}\label{7t5m}
\varepsilon^{(2s+1)}_z \leq \varepsilon^{(2s)}_{GH}.
\end{eqnarray}
Denote
\begin{eqnarray}\label{yff5m}
\varepsilon^{(2s+1)} = \varepsilon^{(2s+1)}_{z}.
 \end{eqnarray}
 By the assumption, $ \varepsilon^{(2s)} = \varepsilon^{(2s)}_{GH}$. Then (\ref{7t5m}) implies
\begin{eqnarray}\label{wmym}
 \varepsilon^{(2s+1)} \leq \varepsilon^{(2s)}.
\end{eqnarray}

We also need the following. By (\ref{vvh7j}), $G^{(2s+1)}_k$ and $H^{(2s+1)}_k$ solve
\begin{eqnarray}\label{99h7j}
\min_{\substack{G_1, H_1,\ldots, G_p, H_p\\}}  \left\|\x -  \sum_{k=1}^{p}G_k H_k \widetilde{\zz}^{(2s)}_k\right\|^2_\Omega.
\end{eqnarray}
But by the assumption, $\widetilde{\zz}^{(2s)} = \widetilde{\zz}^{(2s-1)}$. Therefore, (\ref{99h7j}) is equivalent to
\begin{eqnarray}\label{cch7j}
\min_{\substack{G_1, H_1,\ldots, G_p, H_p\\}}  \left\|\x -  \sum_{k=1}^{p}G_k H_k \widetilde{\zz}^{(2s-1)}_k\right\|^2_\Omega.
\end{eqnarray}
Thus,
\begin{eqnarray}\label{sm27j}
G^{(2s+1)}_k = G^{(2s)}_k \qa H^{(2s+1)}_k = H^{(2s)}_k,
\end{eqnarray}
and, in particular, $ \varepsilon^{(2s+1)}_{GH} =  \varepsilon^{(2s)}_{GH}.$

Further, for $s=1,2,\ldots,$ let us now consider the case $i=2s+1$. Then
\begin{eqnarray*}\label{ttrr}
 &&\hspace*{-5mm}\varepsilon^{(2s+2)}_{GH}=\min_{\substack{G_1, H_1,\ldots, G_p, H_p\\}}  \left\|\x -  \sum_{k=1}^{p}G_k H_k \widetilde{\zz}^{(2s+1)}_k\right\|^2_\Omega \\
 &&\hspace*{40mm} = \left\|\x -  \sum_{k=1}^{p}G^{(2s+2)}_k H^{(2s+2)}_k \widetilde{\zz}^{(2s+1)}_k\right\|^2_\Omega.
\end{eqnarray*}
Therefore, for any $G_k$ and  $H_k$,
\begin{eqnarray}\label{t271r}
 \varepsilon^{(2s+2)}_{GH}\leq\left\|\x -  \sum_{k=1}^{p}G_k H_k \widetilde{\zz}^{(2s+1)}_k\right\|^2_\Omega.
\end{eqnarray}
In particular, for $G_k=G^{(2s+1)}_k$ and  $H_k=H^{(2s+1)}_k$,
\begin{eqnarray}\label{9971r}
 \varepsilon^{(2s+2)}_{GH}\leq\left\|\x -  \sum_{k=1}^{p}G^{(2s+1)}_k H^{(2s+1)}_k \widetilde{\zz}^{(2s+1)}_k\right\|^2_\Omega.
\end{eqnarray}
Then (\ref{sm27j}) implies
\begin{eqnarray}\label{8871r}
 \varepsilon^{(2s+2)}_{GH}\leq\left\|\x -  \sum_{k=1}^{p}G^{(2s)}_k H^{(2s)}_k \widetilde{\zz}^{(2s+1)}_k\right\|^2_\Omega,
\end{eqnarray}
But the RHS in (\ref{8871r}) is $\varepsilon^{(2s+1)}_z$. Denote $\varepsilon^{(2s+2)}=\varepsilon^{(2s+2)}_{GH}$ and recall that  by (\ref{yff5m}),
$\varepsilon^{(2s+1)}= \varepsilon^{(2s+1)}_{z}.$ Therefore, (\ref{8871r}) implies
\begin{eqnarray}\label{vvi9}
 \varepsilon^{(2s+2)} \leq  \varepsilon^{(2s+1)}.
\end{eqnarray}
Thus, (\ref{ww4v}) is true. $\hfill\blacksquare$
\end{proof}

Further, the following observations are a basis for Theorem \ref{v5v} considered below.
Let us denote
\begin{eqnarray}\label{aas1}
f(S,\zz) = f(S_0,\ldots,S_p,\zz_0,\ldots,\zz_p) = \|\x - \sum_{j=0}^pG_j H_j\zz_j \|^2_\Omega,
\end{eqnarray}
where, for $j=0,1,\ldots,p,$  $S_j=G_jH_j$, $S=[S_0,\ldots,S_p]$, $\zz = [\zz_0,\ldots,\zz_p]^T$ and $\zz_0=\y$.

For $i=1,2,\ldots$, we also write  $S^{(i)} = [S_0^{(1)}, S^{(i)}_1,\ldots,S^{(i)}_p]$ and $\widetilde{\zz}^{(i-1)}=[\widetilde{\zz}^{(i-1)}_0,\widetilde{\zz}^{(i-1)}_1,\ldots,\widetilde{\zz}^{(i-1)}_p]^T$ where $\widetilde{\zz}^{(i-1)}_0 = \y$.

Let $\mathcal M$ be a metric space. Following \cite{Amann2006} (p. 134), we call $\alpha \in \mathcal M$ a cluster point of  sequence $\{x_n\} \subset \mathcal M$ if every neighborhood of  $\alpha$ contains infinitely many terms of the sequence. A more precise characterization
of the  cluster point is as follows.

\begin{definition}
Let  $\mathbb{B}(\alpha,\epsilon) \subset \mathcal M$ be an open ball with center $\alpha$ and radius $\epsilon$.
 Point $\alpha$  is called  a {\em cluster point} \cite{Amann2006} of a sequence $\{x_n\} \subset \mathcal M$ iff for each $\epsilon>0$, $m\in\mathbb{N}$, there is $n\geq m$ such that $x_n\in\mathbb{B}(\alpha,\epsilon)$, for all $n\geq m$.
\end{definition}

\begin{definition}
Let $P^{*}=(S^{*},\zz^{*})$ be a cluster point of sequence $\{P^{(j)}\}=\{(S^{(j)},\zz^{(j)})\}$, for $j=1,2,\ldots.$ Point $\overline{S}_{\zz^*}$ is called a {\em best response} to $S$ if
$$
\overline{S}_{{\mathbf z}^*}\in\arg\min_{S\in K_1}f(S,\zz^*).
$$
\end{definition}

We wish now to find a  coordinate-wise minimum point of $f(S,\zz)$. To this end, we  define compact sets $K_1$ and $K_2$  such that
%\footnote{Clearly, $F^{(0)}\in K_1$ and $V^{(0)}\in K_2$.}
\begin{eqnarray}\label{k1234}
K_1=\{S:0\leq f(S,\zz^{(i-1)})\leq f(S^{(i)},\widetilde{\zz}^{(i-1)})\}
\end{eqnarray}
and
\begin{eqnarray}\label{k1235}
 K_2=\{\zz:0\leq f(S^{(i)},\zz))\leq f(S^{(i)},\widetilde{\zz}^{(i-1)})\}.
\end{eqnarray}

\begin{theorem}\label{v5v}
Let $S\in K_1$ and $\zz\in K_2.$  Let $S^{(i)}$ and $\widetilde{\zz}^{(i)}$ be determined by Theorems \ref{9772n}  and \ref{2m7a}, respectively, and let $P^{(i)} = (S^{(i)},\widetilde{\zz}^{(i-1)})$. Then any cluster point of sequence $\{P^{(i)}\}$, say $P^{*}=(S^{*},\zz^{*})$ , is a coordinate-wise minimum point of $f(S,\zz)$, i.e.,
$$
S^{*} \in \arg \min_{S\in K_1} f(S,\zz^{*}) \qa \zz^{*} \in \arg \min_{{\mathbf z}\in K_2} f(S^{*},\zz).
$$
\end{theorem}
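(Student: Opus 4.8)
The plan is to show that $P^*=(S^*,\zz^*)$ is a coordinate-wise minimum by exploiting the alternating structure of the iterations together with the monotonicity proved in Theorem \ref{opwm9} and the continuity of $f$. First I would record the two facts that the construction guarantees at each step: by Theorem \ref{9772n}, $S^{(i)}$ is a global minimizer of $f(\cdot,\widetilde{\zz}^{(i-1)})$ over $K_1$, i.e. $f(S^{(i)},\widetilde{\zz}^{(i-1)})\le f(S,\widetilde{\zz}^{(i-1)})$ for all $S\in K_1$; and by Theorem \ref{2m7a}, $\widetilde{\zz}^{(i)}$ is a global minimizer of $f(S^{(i)},\cdot)$ over $K_2$, i.e. $f(S^{(i)},\widetilde{\zz}^{(i)})\le f(S^{(i)},\zz)$ for all $\zz\in K_2$. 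Combined with Theorem \ref{opwm9}, the scalar sequence $\{\varepsilon^{(i)}\}=\{f(P^{(i)})\}$ is monotone non-increasing and bounded below by $0$, hence convergent; call its limit $\varepsilon^*$. Since $f$ is continuous (it is a quadratic form in the entries of $S$ and a continuous functional of $\zz$ through the $L^2_\Omega$ norm) and $P^*$ is a cluster point, $f(P^*)=\varepsilon^*$, and in fact $f(P^{(i_k)})\to f(P^*)$ along any subsequence $P^{(i_k)}\to P^*$.

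Next I would pass to the limit in the two minimality inequalities. Take a subsequence $\{P^{(i_k)}\}$ converging to $P^*=(S^*,\zz^*)$ with $S^{(i_k)}\to S^*$ and $\widetilde{\zz}^{(i_k-1)}\to \zz^*$ (here one uses that the cluster point is of the product sequence, so both coordinates converge along a common subsequence; one may need to thin the subsequence once more to make $\widetilde{\zz}^{(i_k)}$ converge too, using compactness of $K_2$, and then observe that $f(S^{(i_k)},\widetilde{\zz}^{(i_k)})$ and $f(S^{(i_k)},\widetilde{\zz}^{(i_k-1)})$ have the same limit $\varepsilon^*$ by the sandwich $f(P^{(i_k+1)})\le f(S^{(i_k)},\widetilde{\zz}^{(i_k)})\le f(P^{(i_k)})$). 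For the first inequality, fix an arbitrary $S\in K_1$; then $f(S^{(i_k)},\widetilde{\zz}^{(i_k-1)})\le f(S,\widetilde{\zz}^{(i_k-1)})$ for every $k$, and letting $k\to\infty$ and using continuity of $f$ in both arguments gives $f(S^*,\zz^*)\le f(S,\zz^*)$; since $S\in K_1$ was arbitrary, $S^*\in\arg\min_{S\in K_1}f(S,\zz^*)$. For the second inequality, fix an arbitrary $\zz\in K_2$; then $f(S^{(i_k)},\widetilde{\zz}^{(i_k)})\le f(S^{(i_k)},\zz)$, and letting $k\to\infty$ gives $f(S^*,\zz^*)\le f(S^*,\zz)$, whence $\zz^*\in\arg\min_{\zz\in K_2}f(S^*,\zz)$. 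Together these are exactly the claimed coordinate-wise minimum property.

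The main obstacle I anticipate is bookkeeping around the subsequences and the index shift between $\widetilde{\zz}^{(i)}$ and $\widetilde{\zz}^{(i-1)}$: one has to be careful that along the chosen subsequence \emph{all} of $S^{(i_k)}$, $\widetilde{\zz}^{(i_k-1)}$, and $\widetilde{\zz}^{(i_k)}$ converge, and that their limits line up so that the limit of $f(S^{(i_k)},\widetilde{\zz}^{(i_k)})$ equals $f(S^*,\zz^*)$ rather than some other value. This is handled by compactness of $K_1$ and $K_2$ (passing to a further subsequence) and by the squeeze argument from the monotone convergence of $\{\varepsilon^{(i)}\}$, which forces the "half-step" values $f(S^{(i)},\widetilde{\zz}^{(i)})$ into the same limit $\varepsilon^*$. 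A secondary, more technical point is justifying that the alternating minimization genuinely stays inside the fixed compact sets $K_1,K_2$ defined in \eqref{k1234}--\eqref{k1235}; this follows because each step does not increase $f$, so every iterate satisfies the defining inequality of its set. Once these set-membership and subsequence details are in place, the limiting argument itself is just continuity of $f$ applied to two inequalities, and the theorem follows.
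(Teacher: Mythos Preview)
Your proposal is correct and follows essentially the same route as the paper's proof: extract a convergent subsequence by compactness of $K_1\times K_2$, use the per-step optimality of $S^{(i)}$ and $\widetilde{\zz}^{(i)}$, and pass the resulting inequalities to the limit via continuity of $f$. The paper phrases the argument through the notion of a ``best response'' $\overline{S}_{\zz^*}$ and a chain of inequalities (citing \cite{Zhening2015}) to obtain $f(\overline{S}_{\zz^*},\zz^*)\ge f(S^*,\zz^*)$, then notes equality; you instead fix an arbitrary competitor $S$ (respectively $\zz$) and pass $f(S^{(i_k)},\widetilde{\zz}^{(i_k-1)})\le f(S,\widetilde{\zz}^{(i_k-1)})$ to the limit directly. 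Your explicit squeeze $f(P^{(i_k+1)})\le f(S^{(i_k)},\widetilde{\zz}^{(i_k)})\le f(P^{(i_k)})$, combined with the monotone convergence of $\{\varepsilon^{(i)}\}$ from Theorem~\ref{opwm9}, handles the index shift between $\widetilde{\zz}^{(i_k)}$ and $\widetilde{\zz}^{(i_k-1)}$ more carefully than the paper does, which is a useful refinement rather than a different method.
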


\begin{proof}
Since each $K_j$, for $j=1,2$, is compact, then there is a subsequence $\{P^{(j_t)}\}=\{(S^{(j_t)},{\mathbf z}^{(j_t)})\}$ such that $P^{(j_t)}\rightarrow P^{*}$ when $t\rightarrow \infty$.
Consider  entry $S^{(j_t)}$ of $P^{(j_t)}$. Let $\overline{S}_{{\mathbf z}^*}$ and  $\overline{S}_{{\mathbf z}^{(j_t)}}$ be best responses to $S$ associated with ${{\mathbf z}^*}$ and ${\mathbf z}^{(j_t)}$, respectively.
Then we have \cite{Zhening2015}
\begin{eqnarray*}
\hspace*{0mm}f(\overline{S}_{{\mathbf z}^*},{\mathbf z}^{(j_t)})  \geq  f(\overline{S}_{{\mathbf z}^{(j_t)}},{\mathbf z}^{(j_t)})\geq  f(S^{(j_t+1)},{\mathbf z}^{(j_t+1)})\geq  f(S^{(j_{t+1})},{\mathbf z}^{(j_{t+1})})
\end{eqnarray*}
By continuity, as $t\rightarrow \infty$,
\begin{eqnarray}\label{29bn1}
f(\overline{S}_{{\mathbf z}^*},{\mathbf z}^{*})\geq f(S^{*},\zz^{*}).
\end{eqnarray}
 It implies that (\ref{29bn1}) should hold as an equality, since the inequality is true by the definition of the best response $\overline{S}_{{\mathbf z}^*}$. Thus, $S^{*}$ is the best response for $\zz^*$, or equivalently, $\overline{S}^*$ is the solution for the problem
$\displaystyle \arg \min_{S\in K_1} f(S,\zz^{*}).$

The proof  is similar if we consider entry $\zz^{(j_t)}$ of $P^{(j_t)}$.
 $\hfill\blacksquare$
\end{proof}

% Generated by IEEEtran.bst, version: 1.14 (2015/08/26)

\end{document}